\documentclass[reqno,a4paper,12pt]{amsart} 
\usepackage{amsmath,amscd,amsfonts,amssymb}
\usepackage{mathrsfs,dsfont}
\usepackage{hyperref}

\usepackage{tikz,graphics}
\usetikzlibrary{arrows.meta, 
	bending,     
	patterns     
}

\numberwithin{equation}{section}
\numberwithin{figure}{section}

\addtolength{\topmargin}{-0.375cm}
\addtolength{\textheight}{0.75cm}
\addtolength{\hoffset}{-2cm}
\addtolength{\textwidth}{4cm}

\parskip .06in

\newcommand\R{\mathbb{R}}

\newcommand\Z{\mathbb{Z}}

\newcommand\T{\mathbb{T}}

\newcommand\lam{\lambda}
\newcommand\Lam{\Lambda}
\newcommand\Sig{\Sigma}

\newcommand\Om{\Omega}

\newcommand\1{\mathds{1}}
\newcommand\eps{\varepsilon}

\renewcommand\le{\leqslant}
\renewcommand\ge{\geqslant}
\renewcommand\leq{\leqslant}
\renewcommand\geq{\geqslant}
\newcommand\sbt{\subset}

\newcommand{\ft}[1]{\widehat #1}
\newcommand{\dotprod}[2]{\langle #1 , #2 \rangle}
\newcommand{\mes}{\operatorname{mes}}

\newcommand{\supp}{\operatorname{supp}}

\newcommand{\zoft}[1]{{Z(\ft{\1}_{#1})}}
\newcommand{\zfft}{{Z(\ft{f}\,)}}
\newcommand{\cm}{\complement}

\newcommand{\interior}{\operatorname{int}}

\theoremstyle{plain}
\newtheorem{thm}{Theorem}[section]
\newtheorem{lem}[thm]{Lemma}

\newtheorem{prop}[thm]{Proposition}
\newtheorem{conj}[thm]{Conjecture}

\newtheorem*{claim*}{Claim}

\newcommand{\thmref}[1]{Theorem~\ref{#1}}
\newcommand{\secref}[1]{Section~\ref{#1}}

\newcommand{\lemref}[1]{Lemma~\ref{#1}}

\newcommand{\propref}[1]{Proposition~\ref{#1}}

\newcommand{\conjref}[1]{Conjecture~\ref{#1}}
\newcommand{\exampref}[1]{Example~\ref{#1}}

\theoremstyle{definition}
\newtheorem{definition}[thm]{Definition}
\newtheorem{example}[thm]{Example}
\newtheorem*{definition*}{Definition}
\newtheorem*{remarks*}{Remarks}
\newtheorem*{remark*}{Remark}
\newtheorem{remark}[thm]{Remark}

\newenvironment{enumerate-roman}
{\begin{enumerate}
\addtolength{\itemsep}{5pt}
}
{\end{enumerate}}

\newenvironment{enumerate-alph}
{\begin{enumerate}
\addtolength{\itemsep}{5pt}
}
{\end{enumerate}}

\newenvironment{enumerate-num}
{\begin{enumerate}
\addtolength{\itemsep}{5pt}
}
{\end{enumerate}}

\newenvironment{enumerate-text}
{\begin{enumerate}
\addtolength{\itemsep}{5pt}
}
{\end{enumerate}}

\newcommand{\beql}[1]{\begin{equation}\label{#1}}
\newcommand{\eeq}{\end{equation}}
\newcommand{\comment}[1]{}

\newcommand{\RR}{{\mathbb R}}

\newcommand{\ZZ}{{\mathbb Z}}

\newcommand{\one}{{\mathds{1}}}

\usepackage{txfonts,pxfonts,tikz} 
\usepackage{tgschola}
\usepackage[T1]{fontenc}

\makeatletter
\@namedef{subjclassname@2020}{\textup{2020} Mathematics Subject Classification}
\makeatother


\begin{document}

\title{Spectral sets and weak tiling}

\author[M. N. Kolountzakis]{{Mihail N. Kolountzakis}}
\address{M.K.: Department of Mathematics and Applied Mathematics, University of Crete, Voutes Campus, GR-700 13, Heraklion, Crete, Greece}
\email{kolount@uoc.gr}

\author[N. Lev]{Nir Lev}
\address{N.L.: Department of Mathematics, Bar-Ilan University, Ramat-Gan 5290002, Israel}
\email{levnir@math.biu.ac.il}

\author[M. Matolcsi]{M\'at\'e Matolcsi}
\address{M.M.: Department of Analysis, Institute of Mathematics, Budapest University of Technology and Economics (BME), M\"{u}egyetem rkp. 3., H-1111 Budapest, Hungary (also at Alfred Renyi Institute of Mathematics, ELKH,  H-1053, Realtanoda u 13-15, Budapest, Hungary)}
\email{matomate@renyi.hu}

\date{August 30, 2023}
\subjclass[2020]{42B10, 52B11, 52B45, 52C07, 52C22}
\keywords{Fuglede's conjecture, spectral set, tiling, polytopes, equidecomposability}
\thanks{M.K.\ was supported by the Hellenic Foundation for Research and Innovation, Project HFRI-FM17-1733 and by University of Crete Grant 4725.}
\thanks{N.L.\ was supported by ISF Grant No.\ 1044/21 and 
ERC Starting Grant No.\ 713927.}
\thanks{M.M.\ was supported by NKFIH grants K129335 and K132097.}

\begin{abstract}
A set $\Omega \subset \mathbb{R}^d$ is said to be spectral if the space $L^2(\Omega)$ admits an orthogonal basis of exponential functions. Fuglede (1974) conjectured that $\Omega$ is spectral if and only if it can tile the space by translations. While this conjecture was disproved for general sets, it was recently proved that the Fuglede conjecture does hold for the class of convex bodies in $\mathbb{R}^d$. The proof was based on a new geometric necessary condition for spectrality, called ``weak tiling''. In this paper we study further properties of the weak tiling notion, and present applications to convex bodies, non-convex polytopes, product domains and Cantor sets of positive measure.
\end{abstract}

\maketitle


\section{Introduction}

\subsection{Spectral sets}
Let $\Om \subset \R^d$ be a bounded, measurable set of positive
measure. We say that $\Om$ is \emph{spectral} if there exists a countable set 
$\Lambda\subset \R^d$  such that the system of exponential functions
$\{\exp 2\pi i\dotprod{\lambda}{x}\}$, $\lambda\in \Lambda$,
forms an orthogonal basis in  $L^2(\Om)$,
  that is, the system is orthogonal and complete in the space.
In this case the set  $\Lambda$ is called a \emph{spectrum} for $\Om$.

The study of spectral sets has a long history, which goes back to
Fuglede \cite{Fug74} who conjectured that the spectral sets
could be characterized geometrically as the sets which 
can tile the space by translations. 
We say that the set $\Om$ \emph{tiles the space by translations} 
if there exists a countable set $\Lambda\subset \R^d$ such that the 
translated copies  $\{\Om + \lam\}$, $\lam \in \Lam$,
constitute a partition of $\R^d$ up to measure zero. 

Fuglede's conjecture inspired extensive 
research over the years, and a number of interesting results establishing
connections between spectrality and tiling
had since been obtained. On the other hand,
also counterexamples were
found to both directions of the conjecture 
in dimensions $d \geq 3$, see \cite{Tao04},
 \cite[Section 4]{KM10}.

It was recently proved in \cite{LM22} that the Fuglede conjecture
holds for the class of \emph{convex bodies} in $\R^d$ (compact, 
convex sets with nonempty interior). That is, a convex body
$\Om \subset \R^d$ is spectral if and only if $\Om$ can
tile the space by translations.

\subsection{Weak tiling}
To prove  the Fuglede conjecture for convex domains,
 the authors in \cite{LM22} established a  link between
 the analytic notion of 
spectrality and a geometric notion which was
termed ``weak tiling''. It is defined as follows:

\begin{definition}
We say that the set $\Om$ can \emph{weakly tile}
another measurable (possibly unbounded) 
set $\Sig \subset \R^d$ by translations,
if there exists a positive, locally finite Borel measure $\nu$
on $\R^d$ such that $\1_{\Om} \ast \nu = \1_{\Sig}$ a.e.
\end{definition}

Note that in the special case where the
measure $\nu$ is the sum of (finitely or countably many)
unit masses,  the 
weak tiling becomes a \emph{proper tiling}
of $\Sigma$ by translates of $\Om$.

The following result was proved in \cite{LM22}.

\begin{thm}[{see \cite[Theorem 1.5]{LM22}}]
\label{thmLMSWT}
Let $\Om$ be a bounded, measurable set  in $\R^d$. If $\Om$ is spectral,
then it can weakly tile its complement $\Om^\cm = \R^d \setminus \Om$
  by translations.
\end{thm}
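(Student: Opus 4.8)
The plan is to connect the analytic hypothesis (spectrality) with the desired geometric conclusion (weak tiling of the complement) through a Fourier-analytic computation on the zero set of $\ft\1_\Om$. Let $\Lambda$ be a spectrum for $\Om$, so the exponentials $\{e(\dotprod{\lambda}{\cdot})\}_{\lambda\in\Lambda}$ form an orthogonal basis of $L^2(\Om)$. The orthogonality condition says precisely that for any two distinct $\lambda,\lambda'\in\Lambda$ we have $\ft\1_\Om(\lambda-\lambda')=0$, i.e.\ the difference set $(\Lambda-\Lambda)\setminus\{0\}$ is contained in the zero set $\zoft{\Om}=\{\xi:\ft\1_\Om(\xi)=0\}$. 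The completeness condition, expressed via the Parseval/Plancherel identity, says that for \emph{every} $x\in\R^d$ one has $\sum_{\lambda\in\Lambda}\abs{\ft\1_\Om(x-\lambda)}^2=\abs{\Om}^2$, which is the quantitative statement that $\Lambda$ tiles at the level of the squared Fourier transform.

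The key idea I would pursue is to build the weak-tiling measure $\nu$ directly out of the spectrum $\Lambda$. First I would normalize so that $\abs{\Om}=1$ and consider the counting measure $\delta_\Lambda=\sum_{\lambda\in\Lambda}\delta_\lambda$. The completeness identity $\sum_{\lambda}\abs{\ft\1_\Om(x-\lambda)}^2\equiv 1$ can be rewritten as $(\abs{\ft\1_\Om}^2\,d\xi)\ast\delta_{-\Lambda}\equiv 1$ on the Fourier side; since $\abs{\ft\1_\Om}^2=\ft\1_\Om\cdot\overline{\ft\1_\Om}=\ft{(\1_\Om\ast\1_{-\Om})}$, this should translate, after taking inverse Fourier transforms in the sense of tempered distributions, into a convolution identity of the form $\1_\Om\ast(\text{something built from }\delta_\Lambda)=\1_{\R^d}$. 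Concretely, I expect to obtain a positive measure $\nu$ supported on $\Om^\cm\cup\partial\Om$ such that $\1_\Om\ast\nu=\1_{\Om^\cm}$ almost everywhere, which is exactly the required weak tiling of the complement. The natural candidate for $\nu$ is the "autocorrelation-type" measure associated with $\Lambda$, minus the contribution of $\Om$ itself; the main content is to verify positivity.

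The step I expect to be the main obstacle is establishing the \emph{positivity} of the resulting measure $\nu$. Rewriting the completeness identity as a distributional convolution is formal and yields a signed object a priori; proving that it is a genuine positive measure is the crux. I would attempt this by a duality/approximation argument: test the candidate identity $\1_\Om\ast\nu=\1_{\Om^\cm}$ against nonnegative test functions and use the Parseval relation for the spectrum together with the orthogonality $\ft\1_\Om(\lambda-\lambda')=0$ to show that the pairing is always nonnegative. An alternative and perhaps cleaner route is to pass to a weak-$*$ limit of finite truncations: for large boxes $Q_N$, the finite sum $\sum_{\lambda\in\Lambda\cap Q_N}\delta_\lambda$ convolved with $\1_\Om$ produces a density that is bounded above by $1$ (by orthogonality, the translates $\Om+\lambda$, $\Om+\lambda'$ overlap only where $\ft\1_\Om$ vanishes, hence essentially pack without excess mass) and whose local deficit from the constant function $1$ is accounted for by a positive measure; extracting a locally finite limit then gives $\nu\geq 0$ with $\1_\Om\ast\nu=\1_{\Om^\cm}$.

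Finally I would check the technical requirements in the definition of weak tiling, namely that $\nu$ is \emph{locally finite} and \emph{Borel}. Local finiteness should follow from the uniform upper bound on the packing density of $\{\Om+\lambda\}$, which forces $\Lambda$ to have bounded density and hence the truncated measures to have uniformly bounded mass on compact sets; the weak-$*$ limit then inherits local finiteness. With $\nu\geq 0$, locally finite, and $\1_\Om\ast\nu=\1_{\Om^\cm}$ established, the set $\Om$ weakly tiles its complement $\Om^\cm$, completing the proof.
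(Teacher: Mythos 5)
Your high-level skeleton is the right one, and it matches the actual proof in \cite{LM22} (the paper itself only imports this statement as \cite[Theorem 1.5]{LM22}): orthogonality places $(\Lam-\Lam)\setminus\{0\}$ in $\zoft{\Om}$, completeness gives the tiling identity $\sum_{\lam\in\Lam}|\ft{\1}_\Om(x-\lam)|^2 = m(\Om)^2$, and the weak-tiling measure is built from $\delta_\Lam$ by an autocorrelation construction. But your positivity argument --- the crux, as you yourself flag --- contains a genuine error. You claim that for truncations, $\sum_{\lam\in\Lam\cap Q_N}\1_{\Om+\lam}\le 1$ ``by orthogonality, the translates $\Om+\lam$, $\Om+\lam'$ overlap only where $\ft{\1}_\Om$ vanishes, hence essentially pack''. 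This confuses the space side with the frequency side: $\ft{\1}_\Om(\lam-\lam')=0$ is a condition on the Fourier transform at the frequency $\lam-\lam'$ and says nothing about where (or whether) the sets $\Om+\lam$ and $\Om+\lam'$ overlap in $x$-space. Concretely, $\Om=[0,2]$ is spectral with spectrum $\Lam=\tfrac12\Z$; the translates $\Om+\lam$ overlap massively (each point is covered with multiplicity about $4$), and in general the covering density of $\Om+\Lam$ is $m(\Om)\cdot\dens\Lam = m(\Om)^2$, exceeding $1$ whenever $m(\Om)>1$. What packs (indeed tiles, at level $m(\Om)^2$) is $|\ft{\1}_\Om|^2+\Lam$, not $\1_\Om+\Lam$. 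For the same reason your claimed support $\supp(\nu)\sbt\Om^\cm\cup\partial\Om$ is wrong; the correct statement, which this paper actually uses later (proof of \thmref{thmA7}, via \cite[Corollary 2.6]{LM22}), is $\supp(\nu)\sbt\Delta(\Om)^\cm$, the complement of the essential difference set. Your local-finiteness argument also leans on the same false packing claim.

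The missing idea is that positivity comes from \emph{positive-definiteness}, not from packing. One forms the renormalized autocorrelation $\gam$ of $\delta_\Lam$, a weak-$*$ limit point of $|Q_N|^{-1}\sum_{\lam,\lam'\in\Lam\cap Q_N}\delta_{\lam-\lam'}$, so that $\ft{\gam}$ is a positive measure (Bochner--Schwartz), while $\gam=\delta_0$ near the origin because $\Lam-\Lam$ avoids a punctured neighborhood of $0$ (as $\ft{\1}_\Om(0)=m(\Om)\neq 0$ and $\ft{\1}_\Om$ is continuous). Averaging the tiling identity gives $|\ft{\1}_\Om|^2\ast\gam=m(\Om)^2$, and since $|\ft{\1}_\Om|^2\in L^1$ has Fourier transform $\1_\Om\ast\1_{-\Om}$, which is strictly positive exactly on $\Delta(\Om)$, one deduces $\ft{\gam}=m(\Om)\,\delta_0$ on the open set $\Delta(\Om)$, and separately $\1_\Om\ast\ft{\gam}=m(\Om)$ a.e.\ (using $\supp(\gam)\sbt\zoft{\Om}\cup\{0\}$). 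Then $\nu:=m(\Om)^{-1}\ft{\gam}-\delta_0$ is positive --- on $\Delta(\Om)$ it vanishes since the only mass of $\ft{\gam}$ there is the atom $m(\Om)\delta_0$, and off $\Delta(\Om)$ it is $m(\Om)^{-1}\ft{\gam}\ge 0$ --- and satisfies $\1_\Om\ast\nu=\1_{\Om^\cm}$ a.e.; local finiteness and translation-boundedness are handled by estimates of the type of \cite[Lemma 2.4]{LM22}, not by any density bound on $\Om+\Lam$. So your proposal identifies the right objects but, as written, the decisive positivity step would fail.
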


This result thus gives a geometric condition
necessary for spectrality. It establishes a weak form of the
``spectral implies tiling'' part of  Fuglede's conjecture.
We observe that the weak tiling conclusion cannot in
general be strengthened to proper tiling 
since there exist examples of spectral sets which cannot tile
by translations.

Several applications of \thmref{thmLMSWT} were given
in \cite{LM22}. The main one is the proof that the Fuglede conjecture
holds for convex bodies in $\R^d$. As another  application,
 it was proved in \cite[Theorem 3.6]{LM22}  that
 if a bounded, open  set  $\Om \sbt \R^{d}$ 
 is spectral  then its boundary $\partial \Om$
  must be a set of Lebesgue measure zero.

In the present paper we study further properties and applications
of weak tiling.

\subsection{Equidecomposability}
Let $A,B$ be two (not necessarily convex)
polytopes in $\R^d$. We say that $A$ and $B$ are
 \emph{equidecomposable} if $A$  can be partitioned,
 up to measure zero, into a finite number of smaller polytopes which can be 
rearranged using rigid motions to form, again up to measure zero, a 
partition of $B$. If the pieces of the partition can be
rearranged  using translations only, then 
 $A$ and $B$ are said to be \emph{equidecomposable by translations}.

It has long been known that if a polytope $A \subset \R^d$ can 
(properly) tile the 
space by translations, then  $A$ must be equidecomposable by translations to a cube
of the same volume. This result was first established 
by {M\"urner} in \cite{Mur75}, and
was later rediscovered in \cite{LM95a}.
Recently, it was proved in \cite{LL21}
that the same conclusion holds also for the class of spectral
polytopes, that is, any spectral polytope $A \sbt \R^d$
must be equidecomposable by translations
to a cube of the same volume.

We will prove the following simultaneous
strengthening of the  latter two results:

\begin{thm}
\label{thmNCWT}
    Let $A$ be a (not necessarily convex)
    polytope in $\R^d$. Assume that $A$
    can weakly tile its complement by
    translations. Then $A$ is equidecomposable 
    by translations to a cube of the same volume.
\end{thm}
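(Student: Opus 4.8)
The plan is to first reformulate the hypothesis as a weak tiling of the whole space, and then to reduce the geometric conclusion to the vanishing of a translation‑invariant valuation. Observe that the relation $\1_A \ast \nu = \1_{A^\cm}$ is equivalent to $\1_A \ast (\delta_0 + \nu) = 1$ a.e.\ (the constant function $1$), because $\1_A + \1_{A^\cm} \equiv 1$ and $\1_A \ast \delta_0 = \1_A$. Writing $\mu := \delta_0 + \nu$, which is again a positive, locally finite Borel measure, the hypothesis says precisely that $A$ \emph{weakly tiles all of $\R^d$ at level one}, namely $\1_A \ast \mu = 1$. Thus it suffices to prove that a polytope weakly tiling $\R^d$ is equidecomposable by translations to a cube.

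The geometric tool I would invoke is the Hadwiger--Glur theory of translational scissors congruence. There is a complete, finitely additive, translation‑invariant valuation $H$ on polytopes, taking values in a torsion‑free abelian group (which I tensor with $\R$ so that one may divide freely), with the property that two polytopes $P,Q$ are equidecomposable by translations if and only if $\vol(P) = \vol(Q)$ and $H(P) = H(Q)$. Since any box represents the trivial class, $H(\text{box}) = 0$, and any two equal‑volume boxes are mutually equidecomposable by translations; hence $P$ is equidecomposable by translations to a cube exactly when $H(P) = 0$ (the volume being matched by choice of cube). This is the invariant underlying the theorems of M\"urner and Lagarias--Moews for proper tiles and of Lev--Liu for spectral polytopes, so the whole task is to establish $H(A) = 0$ under the weaker hypothesis of weak tiling.

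For a \emph{proper} tiling the argument is routine: fixing a large cube $Q_R$, one partitions it, up to measure zero, into $N \sim \vol(Q_R)/\vol(A)$ translates of $A$ together with a boundary layer of $o(N)$ pieces; additivity and translation invariance of $H$ give $0 = H(Q_R) = N\,H(A) + O(\text{boundary})$, and dividing by $N$ and letting $R \to \infty$ forces $H(A) = 0$. I would adapt this scheme to weak tiling, the difficulty being that the translates $A + y$, $y \in \supp\mu$, now overlap with multiplicities prescribed by a measure $\mu$ that may have a continuous part, so a cube can no longer be cut into disjoint translates of $A$ and the additivity of $H$ cannot be applied verbatim. I would attack this in one of two ways. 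The first is a regularization argument: truncate and discretize $\mu$, replacing it by finitely supported rational combinations that still approximately balance $\1_A$, evaluate $H$ on the resulting signed arrangements via inclusion--exclusion, and pass to the limit using that $H$ is determined by boundedly many lower‑order face data depending continuously on the configuration. The second, matching the recursive structure of $H$, is an induction on the dimension $d$: the value $H(A)$ is assembled from the $(d-1)$‑dimensional translational scissors‑congruence classes carried by the facets of $A$, and I would show that a weak tiling of $\R^d$ by $A$ forces, facet by facet, a weak tiling relation one dimension down, so that the inductive hypothesis annihilates each contribution.

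The principal obstacle is exactly this passage from the discrete, multiplicity‑free bookkeeping of a proper tiling to the measure‑theoretic setting of weak tiling: one must show that the overlaps encoded by the continuous part of $\mu$ make no net contribution to the Hadwiger invariant. I expect the cleanest resolution to come from reading the relevant facet and lower‑dimensional normal data off the asymptotics of $\hat{\1}_A$, since the relation $\hat{\1}_A \cdot \hat\mu = \delta_0$ in the sense of distributions confines $\supp\hat\mu \setminus \{0\}$ to the zero set $Z(\hat{\1}_A)$, which is precisely where those face data are encoded; this should convert the vanishing of $H(A)$ into an analytic statement about the Fourier asymptotics of $\hat{\1}_A$ that the weak tiling relation renders transparent.
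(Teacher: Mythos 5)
Your opening reduction is correct and matches the paper's: setting $\mu = \delta_0 + \nu$ turns the hypothesis into $\1_A \ast \mu = 1$ a.e., and the distributional consequence you state at the end --- that $\supp\ft{\mu} \setminus \{0\}$ is confined to $Z(\ft{\1}_A)$ --- is precisely the paper's starting point (via \thmref{thmC1}; note that to even make sense of $\ft{\mu}$ you need $\nu$ to be translation-bounded, not merely locally finite, which the paper gets from \cite[Lemma 2.4]{LM22} --- a point you pass over). Your identification of the Hadwiger invariants as the obstruction, and your closing intuition that the problem should be settled by Fourier asymptotics of $\ft{\1}_A$ on $Z(\ft{\1}_A)$, is exactly the route the paper takes. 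But what you have written is a research plan, not a proof, and the two ingredients that constitute the actual argument are both missing. First, you need the hard input from \cite{LL21} (\thmref{thmFA} in the paper): if some Hadwiger functional $H_\Phi(A) \neq 0$, then $\ft{\1}_A$ is \emph{zero-free} on a structured set $S$, namely an $\eps$-neighborhood of the points $nv$ where $n$ ranges over a Bohr set $T$ minus a bounded window $J$. Your phrase ``reading the relevant facet data off the asymptotics of $\ft{\1}_A$'' gestures at this but neither states nor proves it. Second, even granting that $\ft{\mu} = m(A)^{-1}\delta_0$ on $S$, one still needs a contradiction mechanism: the paper constructs a Schwartz function $g$ with $\supp(g) \sbt S$, $\ft{g} \geq 0$, and $\int g$ arbitrarily large relative to $g(0)$ (a product of a bump $\varphi$, a torus function $\psi$ with nonnegative Fourier coefficients sampled along $n\tau$, and a dilated Fej\'{e}r kernel killing the frequencies in $J$). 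Producing a positive-definite function supported in such a sparse set --- which in particular omits a neighborhood of the origin --- with unbounded mass-to-peak ratio is the delicate part, and nothing in your proposal supplies it.

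Your two fallback strategies also do not survive scrutiny. The discretization/inclusion--exclusion idea founders because Hadwiger functionals are not continuous in any topology compatible with vague convergence of measures (volume is, but the lower-order invariants are not), and because when $\mu$ has a continuous part there is no ``signed arrangement of polytopes'' on which to evaluate a valuation at all --- the overlaps have infinitesimal weights, not multiplicities. The induction-on-facets idea presupposes that a weak tiling of $\R^d$ induces weak tiling relations on the $(d-1)$-dimensional facet data; for proper tilings this facet-matching is real, but for a weak tiling with diffuse $\mu$ the translates need not meet facet-to-facet in any sense, and you offer no substitute. So the only viable branch of your proposal is the Fourier one, and there the gap is exactly the content of \thmref{thmFA} plus the Fej\'{e}r-kernel construction in the paper's Section 2.
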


\subsection{Convex domains}
\label{secINTCVXDOM}

It is a classical result due to {M\"urner},
see \cite[Section 3.3]{Mur77},
that a \emph{convex} polytope $A \sbt \R^d$ is
equidecomposable by translations to a cube if and only if
$A$ is centrally symmetric and all the
facets of $A$ (that is, the $(d-1)$-dimensional faces of $A$)
are also centrally symmetric. Together with
 \thmref{thmNCWT} this implies that
a convex polytope which
    can weakly tile its complement  by
    translations must be centrally symmetric 
    and have centrally symmetric facets.

Actually, this conclusion can be significantly
strengthened as follows.
It was proved in \cite[Theorem 4.1]{LM22}
that if $A$ is a general convex body 
in $\R^d$ and if $A$ can weakly tile its complement   by 
translations, then $A$ must in fact be a convex polytope.
 It was also proved \cite[Theorem 6.1]{LM22} that
  if, in addition,
 $A$ is centrally symmetric and has centrally symmetric facets, 
then each belt of $A$  must have either $4$ or $6$ facets. 
So in the latter case, it follows by the 
Venkov-McMullen theorem
\cite{Ven54}, \cite{McM80} (see also
 \cite[Section 32.2]{Gru07})
that $A$ can tile its complement
 not only weakly, but even
 properly, by translations.
We thus arrive at the following result:

\begin{thm}
\label{thmCVXBWT}
    Let $A$ be a convex body in $\R^d$, and assume that
    $A$ can weakly tile its complement by
    translations. Then $A$ must be a convex
 polytope which can also tile the space properly by translations.
\end{thm}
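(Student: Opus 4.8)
The plan is to assemble the statement from the chain of structural facts recalled just above, applying them in sequence so as to refine the geometric description of $A$ step by step until the Venkov--McMullen tiling criterion becomes applicable. The first move is to pass from a general convex body to a polytope: since $A$ is a convex body that weakly tiles its complement by translations, \cite[Theorem 4.1]{LM22} applies verbatim and forces $A$ to be a convex polytope. This reduction is essential, because every subsequent ingredient (equidecomposability and the belt analysis) is stated only for polytopes.

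Knowing now that $A$ is a convex polytope weakly tiling its complement, I would apply \thmref{thmNCWT} to conclude that $A$ is equidecomposable by translations to a cube of the same volume. Combining this with M\"urner's characterization \cite[Section 3.3]{Mur77} --- a convex polytope is equidecomposable by translations to a cube precisely when it is centrally symmetric and all of its facets are centrally symmetric --- yields that $A$ is centrally symmetric and has centrally symmetric facets. These are exactly two of the three hypotheses in the Venkov--McMullen criterion for being a translational tile.

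It remains to control the belts. At this point the hypotheses of \cite[Theorem 6.1]{LM22} are met in full: $A$ is a centrally symmetric convex polytope with centrally symmetric facets that weakly tiles its complement. That result therefore shows that each belt of $A$ consists of either $4$ or $6$ facets, supplying the third and last condition. With all three conditions in hand, the Venkov--McMullen theorem \cite{Ven54}, \cite{McM80} (see also \cite[Section 32.2]{Gru07}) gives that $A$ tiles $\R^d$ by translations; equivalently, adjoining the copy $A$ itself, $A$ tiles its complement not merely weakly but properly, which is the assertion.

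In this argument the assembly itself is routine bookkeeping: the work lies entirely in verifying that the hypotheses of each cited result are satisfied before it is invoked. The genuine difficulty is concentrated in the two geometric inputs I am taking as given, namely the reduction to a polytope and, above all, the belt count of \cite[Theorem 6.1]{LM22}. The latter I expect to be the main obstacle in a from-scratch proof, since it rests on a delicate local analysis of how the weak-tiling measure $\nu$ is forced to distribute its mass near the codimension-two faces of $A$, and it is precisely this local rigidity that rules out belts with more than six facets and thereby upgrades weak tiling to proper tiling.
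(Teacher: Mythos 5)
Your proposal is correct and follows essentially the same route as the paper: \secref{secINTCVXDOM} assembles \thmref{thmCVXBWT} from exactly the same chain, namely \cite[Theorem 4.1]{LM22} to reduce to a polytope, \thmref{thmNCWT} together with M\"urner's characterization \cite[Section 3.3]{Mur77} for central symmetry of $A$ and its facets, \cite[Theorem 6.1]{LM22} for the belt condition, and the Venkov--McMullen theorem to conclude proper tiling. Your closing assessment of where the real difficulty lies (the belt analysis of the weak-tiling measure) is also consistent with the paper's treatment, which cites that step rather than reproving it.
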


We observe that for general sets (i.e.\ not assumed to be convex)
the result does not hold, 
since there exist examples of spectral sets which cannot tile
by translations.

 \subsection{Product domains}
Let $A \subset \R^n$ and  $B \subset \R^m$ be two
bounded, measurable sets. It is known that if
$A$ and $B$ are both spectral  sets in $\R^n$
and $\R^m$ respectively,
then their cartesian product 
 $\Omega = A \times B$  is spectral  in 
$\R^n \times \R^m$.
 Indeed, if $U \subset \R^n$  is a spectrum for $A$,  
and $V \subset \R^m$ is a spectrum for $B$,  then the
 product set $\Lam = U \times V$  serves as 
 a spectrum for $\Om$ (see e.g.\ \cite[Theorem 3]{JP99}).

In  \cite{Kol16} the question was posed as  to whether
the converse statement is also true. 

\begin{conj}
	\label{conjA1.1}
	Let $A \subset \R^n$ and $B \subset \R^m$   be two
	bounded, measurable sets. Then their product  $\Omega  = A \times B$
	is spectral if and only if $A$ and $B$ are both spectral sets.
\end{conj}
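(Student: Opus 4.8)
The forward implication---$A$ and $B$ spectral $\Rightarrow \Omega = A\times B$ spectral---is already recorded above via the product spectrum $\Lambda = U\times V$, so the entire content lies in the converse. The plan is to start from a spectrum $\Lambda \subset \R^n\times\R^m$ of $\Omega$ and try to manufacture spectra for the two factors. I would work through the standard frequency reformulation of spectrality: $\Lambda$ is a spectrum for $\Omega$ if and only if
\[
\sum_{\lambda\in\Lambda}\Abs{\ft{\1}_{\Omega}(\zeta+\lambda)}^2 = \Abs{\Omega}^2
\qquad \text{for a.e. } \zeta\in\R^n\times\R^m,
\]
together with the orthogonality condition that $\ft{\1}_{\Omega}(\lambda-\lambda')=0$ for distinct $\lambda,\lambda'\in\Lambda$. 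For a product domain the transform factors, $\ft{\1}_{\Omega}(\xi,\eta)=\ft{\1}_A(\xi)\,\ft{\1}_B(\eta)$, and this is the structural feature I would try to exploit.

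The orthogonality part transfers for free along slices. Writing $\lambda=(\mu,\tau)$ with $\mu\in\R^n$, $\tau\in\R^m$, fix $\tau_0$ and set $U_{\tau_0}=\{\mu:(\mu,\tau_0)\in\Lambda\}$. For distinct $\mu,\mu'\in U_{\tau_0}$ the orthogonality of $\Lambda$ gives $\ft{\1}_A(\mu-\mu')\,\ft{\1}_B(0)=0$; since $\ft{\1}_B(0)=\Abs{B}\neq 0$, we conclude $\ft{\1}_A(\mu-\mu')=0$, so every slice $U_{\tau_0}$ is automatically an orthogonal system of exponentials for $A$, and symmetrically in the other variable. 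The whole difficulty is therefore pushed onto \emph{completeness}: I would need to locate a slice, or a judiciously chosen sub-collection drawn across several slices, that is simultaneously orthogonal and complete for $A$.

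This completeness step is exactly where I expect the argument to stall, and it is presumably the reason the statement is posed as a conjecture rather than a theorem. The tension is sharp: a single slice $U_{\tau_0}$ is orthogonal but typically too sparse to be complete, whereas the full projection $\pi_1(\Lambda)=\bigcup_{\tau}U_{\tau}$ is dense enough but loses orthogonality (for $\mu,\mu'$ in different slices, $\ft{\1}_A(\mu-\mu')$ need not vanish, the product being saved only by $\ft{\1}_B(\tau-\tau')=0$). In other words, an orthogonal basis of exponentials for $L^2(\Omega)=L^2(A)\otimes L^2(B)$ need not be a tensor product of bases, and there is no evident mechanism forcing $\Lambda$ toward product structure. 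As a fallback yielding genuine partial progress, I would instead invoke \thmref{thmLMSWT}: since $\Omega$ is spectral it weakly tiles $\Omega^{\cm}$, and I would attempt to factor the weak-tiling measure (or integrate out one coordinate) to show that $A$ weakly tiles $A^{\cm}$ and $B$ weakly tiles $B^{\cm}$. Because weak tiling is strictly weaker than spectrality, this recovers only the \emph{necessary} (weak-tiling) half of the conclusion for each factor and stops short of proving that $A$ and $B$ are themselves spectral---leaving the full converse open.
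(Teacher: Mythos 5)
This statement is \conjref{conjA1.1}, which the paper poses as an open problem and does not prove, and your proposal correctly diagnoses the situation: the slice argument is sound as far as it goes (for distinct $\mu,\mu' \in U_{\tau_0}$ one indeed gets $\ft{\1}_A(\mu-\mu')\,\ft{\1}_B(0)=0$ with $\ft{\1}_B(0)=m(B)\neq 0$, so each fiber is orthogonal for $A$), and completeness is precisely the obstruction, since a spectrum of $L^2(A)\otimes L^2(B)$ need not have product structure. Your fallback is exactly the paper's actual route to partial progress: the factorization of weak tiling is its \thmref{thmPRDWKT}, proved just as you suggest by integrating out one coordinate (the measure $\nu_y$ defined by pairing $\nu$ against $\1_B(y-v)$, followed by Fubini), which combined with Theorems \ref{thmLMSWT} and \ref{thmCVXBWT} yields \thmref{thmPRDCVX} --- spectrality of $A$ itself when $A$ is a convex body --- while for general factors the conclusion stops at weak tiling, leaving the conjecture open, as you say.
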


The ``only if'' part of this conjecture  is the non-trivial one. 
The difficulty lies in that we assume the product set $\Omega$ to be spectral, but
 we do not make any a priori assumption
 that the spectrum $\Lambda$ also has a  product structure,
so it is not obvious which sets $U$ and $V$ may serve  as spectra for the factors $A$ and $B$, respectively.
(One can show, see \cite[Lemma 2]{JP99}, that
if  $\Om$ happens to admit  a spectrum $\Lam$ with a product structure, $\Lam = U \times V$, then $U$ is a spectrum for $A$ and $V$ is a spectrum for $B$.)

It was proved  in \cite{GL16}
that \conjref{conjA1.1}
holds in the case   where one of the factors, say $A$,
 is an interval in $\R$. In \cite{Kol16} 
 it was established, using a different approach,
that the conjecture is true also if the set $A$ is the union
 of two intervals in $\R$. 
In \cite{GL20} the conjecture was proved in the case where
the factor $A$ is a convex polygon in $\R^2$.

As an application of the weak tiling method, we will prove
the following result:

\begin{thm}
	\label{thmPRDCVX}
	Let $\Omega=A\times B$ where $A$ is a convex body in $\R^n$, 
	while $B$ is any bounded, measurable set in $\R^m$.
	If $\Omega$ is a spectral set then $A$ must be spectral,
    or equivalently, $A$ must be a convex
 polytope which can (properly) tile the space by translations.
\end{thm}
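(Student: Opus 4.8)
The plan is to reduce the spectrality of $\Om = A \times B$ to a weak tiling statement for the single factor $A$, and then to invoke \thmref{thmCVXBWT} together with the Fuglede conjecture for convex bodies. (Note that the ``equivalently'' in the statement is precisely the latter result, proved in \cite{LM22}: a convex body is spectral if and only if it tiles $\R^n$ by translations, so it suffices to produce a proper tiling by translates of $A$.) Since $A$ is compact and $B$ is bounded, $\Om$ is bounded; assuming it is spectral, \thmref{thmLMSWT} supplies a positive, locally finite Borel measure $\nu$ on $\R^n \times \R^m$ with
\[ \1_{A \times B} \ast \nu = \1_{(A\times B)^\cm} \quad \text{a.e.} \]
Writing $\1_{A\times B}(x,y) = \1_A(x)\1_B(y)$ and $\1_{(A\times B)^\cm}(x,y) = 1 - \1_A(x)\1_B(y)$, the heart of the argument is to eliminate the $\R^m$-variable by integrating this identity against the weight $\1_B(y)\,dy$.

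Concretely, I would introduce the autocorrelation $\Psi(t) := \int_{\R^m}\1_B(w)\1_B(w+t)\,dw = \Abs{B\cap(B-t)}$, which is nonnegative, bounded by $\Abs{B}$, and supported in the bounded set $\overline{B-B}$, and define a measure $\mu$ on $\R^n$ as the pushforward under the projection $(s,t)\mapsto s$ of $\Psi(t)\,d\nu(s,t)$, i.e.\ $\mu(E) = \int_{E\times\R^m}\Psi(t)\,d\nu(s,t)$ for Borel $E \sbt \R^n$. Because $\Psi$ is bounded with bounded support and $\nu$ is locally finite, $\mu$ is positive and locally finite. Integrating the weak tiling identity against $\1_B(y)\,dy$ and applying Tonelli's theorem (every integrand is nonnegative) should give, for a.e.\ $x$,
\[ (\1_A \ast \mu)(x) = \int \1_A(x-s)\,\Psi(t)\,d\nu(s,t) = \int_{\R^m}\1_{(A\times B)^\cm}(x,y)\,\1_B(y)\,dy. \]
The right-hand side equals $\Abs{B} - \Abs{B}\,\1_A(x) = \Abs{B}\,\1_{A^\cm}(x)$; the decisive point is that, precisely because the weight $\1_B$ is supported on $B$, the term that would otherwise be a nonzero constant multiple of $\1_A$ cancels exactly. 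Dividing by $\Abs{B}>0$ then yields $\1_A \ast (\Abs{B}^{-1}\mu) = \1_{A^\cm}$ a.e., so that $A$ weakly tiles its complement $A^\cm$ by translations.

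Given this reduction the conclusion is immediate: by \thmref{thmCVXBWT} the convex body $A$ must be a convex polytope which tiles $\R^n$ properly by translations, and hence $A$ is spectral by the Fuglede conjecture for convex bodies.

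I expect the main obstacle to lie not in any estimate but in the bookkeeping of the reduction. Two points require care: passing from the a.e.\ convolution identity on $\R^n\times\R^m$ to an a.e.\ identity in $x\in\R^n$ after integrating in $y$ (via Fubini one fixes the full-measure set of $x$ for which the fiberwise identity holds for a.e.\ $y$, and only then integrates against $\1_B$); and verifying that the pushforward $\mu$ is genuinely locally finite, which uses $\mu(E)\le \Linf{\Psi}\,\nu(E\times\supp\Psi)<\infty$ for bounded $E$. The conceptual crux, as opposed to the technical one, is simply the choice of the test weight $\1_B$: it is this choice that annihilates the $A$-supported term and converts the weak tiling of the product $A\times B$ into a weak tiling of the factor $A$ alone.
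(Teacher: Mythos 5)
Your proposal is correct, and it follows the paper's route at the top level: spectrality of $\Om$ yields weak tiling of $\Om^\cm$ by \thmref{thmLMSWT}, this is transferred to the factor $A$, and then \thmref{thmCVXBWT} together with the Fuglede conjecture for convex bodies \cite{LM22} finishes the argument. The one genuine difference is how you execute the transfer step. The paper (proof of \thmref{thmPRDWKT}) slices: for each $y \in \R^m$ it defines a measure $\nu_y$ on $\R^n$ by $\int \varphi(u)\,d\nu_y(u) = \iint \varphi(u)\,\1_B(y-v)\,d\nu(u,v)$, observes $(\1_A \ast \nu_y)(x) = (\1_\Om \ast \nu)(x,y)$, and uses Fubini to select a single good slice $y \in Y \cap B$ for which $\1_A \ast \nu_y = \1_{A^\cm}$ a.e. You instead average over $y \in B$ against $\1_B(y)\,dy$, which produces the pushforward of $\Psi(t)\,d\nu(s,t)$ with $\Psi = \1_B \ast \1_{-B}$; in effect your $\mu$ equals $\int_{\R^m} \1_B(y)\,\nu_y\,dy$, so the two arguments are Tonelli-based reformulations of one another. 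Your version avoids choosing a particular slice at the cost of the normalization $m(B)^{-1}$ (note you implicitly use $m(B)>0$, which is legitimate since spectral sets have positive measure by definition), and it proves only the one direction of \thmref{thmPRDWKT} actually needed here, whereas the paper establishes the full product equivalence, including the converse and the statement for both factors. Your bookkeeping points (the Fubini step passing from the a.e.\ identity on $\R^n \times \R^m$ to a.e.\ $x$, and local finiteness of $\mu$ via the boundedness and bounded support of $\Psi$) are exactly the right ones, and both check out.
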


This significantly strengthens \cite[Theorems 2.1 and 2.2]{GL20}
where it was proved that $A$ cannot have a smooth boundary,
and that if $A$ is assumed a priori to be a convex polytope,
then $A$ must be centrally symmetric and have centrally symmetric facets.

It is known, 
see \cite[Section~1.2]{Kol16}, that 
 the product set  $\Omega  = A \times B$ can tile the 
 space $\R^n \times \R^m$ by translations if and 
 only if both $A$ tiles $\R^n$ and $B$ tiles $\R^m$.
In order to prove  \thmref{thmPRDCVX} we shall use
 an analogous result for weak tiling:

\begin{thm}
	\label{thmPRDWKT}
	Let $A \sbt \R^n$ and $B \sbt \R^m$ be two bounded,
 measurable sets. Then the product set
 $\Omega=A\times B$ can weakly tile
 its complement in $\R^n \times \R^m$ by translations
 if and only if 
  both $A$ and $B$ weakly tile
 their complements in $\R^n$ and $\R^m$
 respectively.
 \end{thm}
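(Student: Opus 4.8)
The plan is to prove the two implications separately, exploiting the tensor factorization $\1_{A\times B}=\1_A\otimes\1_B$ of the indicator of a product set, together with the elementary compatibility of convolution with tensor products: if $f=f_1\otimes f_2$ and $\sigma=\sigma_1\otimes\sigma_2$, then $f\ast\sigma=(f_1\ast\sigma_1)\otimes(f_2\ast\sigma_2)$. I would also record at the outset that the hypotheses in either direction force $|A|,|B|>0$, since a null factor makes the relevant indicator vanish a.e.\ and no convolution against a positive measure can then equal the constant $1$; this legitimizes the divisions below.

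For the ``if'' direction, suppose $\mu$ on $\R^n$ and $\rho$ on $\R^m$ are positive, locally finite measures with $\1_A\ast\mu=\1_{A^\cm}$ and $\1_B\ast\rho=\1_{B^\cm}$. I would simply exhibit
$$\nu=(\mu\otimes\delta_0)+(\delta_0\otimes\rho)+(\mu\otimes\rho),$$
where $\delta_0$ is the unit mass at the origin. By the factorization above, the three summands convolve $\1_{A\times B}$ into $\1_{A^\cm\times B}$, $\1_{A\times B^\cm}$ and $\1_{A^\cm\times B^\cm}$ respectively; since these three product sets partition $\Om^\cm=(A\times B)^\cm$, their sum equals $\1_{\Om^\cm}$. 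Positivity of $\nu$ is clear, and local finiteness holds because a product of locally finite measures assigns to a bounded box $K\times L$ the finite mass $\mu(K)\rho(L)$.

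The ``only if'' direction is the substantial one. Starting from a positive, locally finite $\nu$ on $\R^{n+m}$ with $\1_{A\times B}\ast\nu=\1_{\Om^\cm}$, I must manufacture a measure $\mu$ on $\R^n$ witnessing the weak tiling of $A^\cm$ by $A$. The key idea is to restrict the identity to the slab $\R^n\times B$, where the right-hand side simplifies to $\1_{\Om^\cm}(x,y)=\1_{A^\cm}(x)$, and then integrate out $y$. As the identity holds a.e.\ on $\R^{n+m}$, it holds a.e.\ on $\R^n\times B$, so for a.e.\ $x$ integrating in $y\in B$ gives
$$\int_B\Big(\int \1_A(x-s)\,\1_B(y-t)\,d\nu(s,t)\Big)\,dy=|B|\,\1_{A^\cm}(x).$$
By Tonelli (all integrands are nonnegative) I interchange the integrals; the inner $y$-integral produces the autocorrelation $\psi(t):=\int_B\1_B(y-t)\,dy=|B\cap(B+t)|$, leaving $\int \1_A(x-s)\,\psi(t)\,d\nu(s,t)$. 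Defining $\mu$ by $\mu(E)=|B|^{-1}\int \1_E(s)\,\psi(t)\,d\nu(s,t)$ (that is, $|B|^{-1}$ times the push-forward to $\R^n$ of the weighted measure $\psi(t)\,d\nu(s,t)$), the displayed equation becomes $|B|\,(\1_A\ast\mu)=|B|\,\1_{A^\cm}$, hence $\1_A\ast\mu=\1_{A^\cm}$. The same argument with the slab $A\times\R^m$ and the autocorrelation of $A$ yields $\rho$ with $\1_B\ast\rho=\1_{B^\cm}$.

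The step I expect to require the most care is verifying that $\mu$ is \emph{locally finite}; this is precisely where a naive marginalization of $\nu$ under the projection $(s,t)\mapsto s$ would fail, as such a projection can turn a locally finite measure into an infinite one. The weight $\psi$ is exactly what rescues the construction: $\psi$ is bounded by $|B|$ and supported on the bounded difference set $B-B$, so for bounded $E$ one has $\mu(E)\le\nu\big(E\times(B-B)\big)<\infty$ by local finiteness of $\nu$. Positivity of $\mu$ is immediate from $\psi\ge 0$ and $\nu\ge 0$, which completes the plan.
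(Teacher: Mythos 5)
Your proof is correct, and your ``if'' direction coincides with the paper's: the paper writes the witness as $(\delta_0+\nu_1)\times(\delta_0+\nu_2)$ minus $\delta_0\times\delta_0$, which expands to exactly your three-term sum $\mu\otimes\delta_0+\delta_0\otimes\rho+\mu\otimes\rho$. In the ``only if'' direction you take a genuine, if modest, variant. The paper also works in the slab $\R^n\times B$, but it keeps the fibers separate: for each $y\in\R^m$ it defines a slice measure $\nu_y$ by $\int\varphi\,d\nu_y=\iint\varphi(u)\,\1_B(y-v)\,d\nu(u,v)$, observes that $(\1_A\ast\nu_y)(x)=(\1_\Omega\ast\nu)(x,y)$, and then uses Fubini to produce a full-measure set $Y\sbt\R^m$ such that any single $y\in Y\cap B$ already yields a witness $\nu_y$ for the weak tiling of $A^\cm$ (positivity of $m(B)$ is used implicitly there, exactly the point you make explicit at the outset). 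Your measure $\mu$ is precisely the average $m(B)^{-1}\int_B\nu_y\,dy$ of these slices, with the autocorrelation weight $\psi(t)=m(B\cap(B+t))$ arising from the $y$-integration. What the averaging buys is a canonical, explicitly constructed witness with no selection of a good fiber, at the cost of a normalization, a Tonelli interchange, and an obligatory local-finiteness check --- which you handle correctly, and your observation that a naive marginal of $\nu$ would fail while the compactly supported, bounded weight $\psi$ saves it (giving $\mu(E)\le\nu(E\times(B-B))<\infty$ for bounded $E$) is exactly the right point; the paper's slice $\nu_y$ satisfies the analogous bound $\nu_y(E)\le\nu(E\times(y-B))$, left implicit there. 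What the paper's selection argument buys is brevity. Both routes are sound, and yours is in effect an integrated version of the paper's fiberwise one.
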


\thmref{thmPRDCVX} is thus obtained by a combination of
Theorems \ref{thmLMSWT}, \ref{thmPRDWKT} and
\ref{thmCVXBWT}.

Similarly, by a combination of 
Theorems \ref{thmLMSWT}, \ref{thmPRDWKT} and
\ref{thmNCWT} we obtain that
if $A$ is a (not necessarily convex)
    polytope in $\R^n$, and if $B$ is any
     bounded,
 measurable set in $\R^m$, then the spectrality
of the product $\Omega=A\times B$ 
implies that $A$ is equidecomposable 
    by translations to a cube.
    Alternatively, it is possible 
     to obtain this result
    by a combination of 
    \cite[Lemma 5.1]{GL20} and 
    \cite[Theorem 7.1]{LL21}.

\subsection{Nowhere dense sets}
\label{subsecNW}
Assume now that $\Om \sbt \R^d$ is
a \emph{bounded, nowhere dense set} 
of positive measure. It is not hard to show that
such a set $\Om$ cannot tile the space by translations.
To see this, suppose to the contrary that $\Omega+\Lambda$ is a tiling, 
then $\Lambda$ must be a locally finite set. Since $\Om$ is bounded,
 any ball  can thus intersect only finitely many translated copies $\Omega+\lambda$, $\lambda \in \Lambda$. Since $\Omega$ is a nowhere dense set, then also any finite union of translates of $\Omega$ is a nowhere dense set. Hence the union of all the translated copies $\Omega+\lambda$ intersecting a given ball  is a nowhere dense set, so this union does not cover a set of full measure in the ball, a contradiction.

In \cite{Mat05}, the following question was 
posed: can a bounded, nowhere dense set
 be spectral?
The answer is expected to be negative, but so far this has been proved only in dimension one. 
In fact, it was proved in \cite[Corollary 1.6]{IK13}
that if a bounded, measurable set $\Omega \sbt \R$ is 
spectral, then it
can \emph{multi-tile} the real line by translations.
This  excludes the possibility that $\Om$ is a
nowhere dense set, by the same argument as above.

In this paper we use the weak tiling method
as a different approach to the spectrality problem
for bounded, nowhere dense sets. We will prove the following result:

\begin{thm}
\label{thmCSNWT}
Let $E \sbt \R$ be a symmetric Cantor set
of positive measure.  Then $E$  
cannot weakly tile its   complement by translations.
As a consequence, $E$ is not spectral.
\end{thm}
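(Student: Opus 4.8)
\emph{Plan.} The second assertion will follow immediately from the first together with \thmref{thmLMSWT}: if $E$ were spectral then it would weakly tile its complement $E^\cm$, which the first assertion forbids. So the whole content is to show that no positive, locally finite Borel measure $\nu$ can satisfy $\1_E \ast \nu = \1_{E^\cm}$ almost everywhere. I would prove this by a semicontinuity argument by contradiction, using only three features that a symmetric Cantor set of positive measure possesses: $E$ is \emph{closed}, $E$ is \emph{nowhere dense}, and $|E|>0$. (In particular symmetry will play no role; the same proof applies to any closed, nowhere dense set of positive Lebesgue measure.)

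Assume such a $\nu$ exists and set $g := \1_E \ast \nu$, so that $g(x) = \nu(x-E)$ and $g = \1_{E^\cm}$ a.e. The first step is to verify that $g$ is \emph{upper semicontinuous}. Since $E$ is closed, $\1_E$ is upper semicontinuous, so for $x_n \to x$ one has $\limsup_n \1_E(x_n - y) \le \1_E(x-y)$ for every $y$. Because $E$ is bounded and $\nu$ is locally finite, for $x_n$ in a fixed compact neighbourhood of $x$ the integrands $\1_E(x_n - \cdot)$ are all dominated by $\1_B$ for one bounded set $B$, with $\nu(B) < \infty$. The reverse Fatou inequality then gives $\limsup_n g(x_n) \le \int \1_E(x-y)\,d\nu(y) = g(x)$, which is upper semicontinuity.

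The second step exploits nowhere density. Since $E$ is nowhere dense, $E^\cm$ is open and dense, and therefore the set $\{g=1\}$, which contains $E^\cm$ minus a Lebesgue-null set, is dense in $\R$ (removing a null set from an open dense set leaves a dense set). Hence every point of $\R$ is a limit of points at which $g$ equals $1$, and upper semicontinuity yields $g(x) \ge \limsup_{y\to x} g(y) \ge 1$ for \emph{all} $x \in \R$. On the other hand $g = \1_{E^\cm} = 0$ a.e. on $E$, and since $|E|>0$ there exists at least one point $x_0 \in E$ with $g(x_0)=0$. This contradicts $g(x_0)\ge 1$, which completes the argument.

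I expect the only delicate point to be the first step. The hypothesis $\1_E \ast \nu = \1_{E^\cm}$ holds only almost everywhere, and $\nu$ may be singular (even continuous), so a naive pointwise or mollification argument is unavailable; one must establish that $g$ is genuinely upper semicontinuous, not merely a.e.\ equal to such a function. This is exactly where boundedness of $E$ and local finiteness of $\nu$ are used, to supply the dominating function required for the reverse Fatou inequality. Once upper semicontinuity is in hand, the contradiction between ``$g\ge 1$ everywhere'' (from density of $\{g=1\}$) and ``$g=0$ a.e.\ on the positive-measure set $E$'' is immediate.
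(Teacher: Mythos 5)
Your proof is correct, and it takes a genuinely different route from the paper — in fact the paper gives two proofs (one via the essential difference set and packing regions in Section 5, one via a quantitative overlap inequality in Section 6), and yours coincides with neither. Each step checks out: the pointwise representative $g(x)=\nu(x-E)$ is a legitimate representative of the convolution by Tonelli (positivity of $\1_E$ and $\nu$ is all that is needed); upper semicontinuity of $g$ follows from your reverse-Fatou argument, or even more directly from outer regularity of the locally finite (hence Radon) measure $\nu$ — given $\eps>0$ choose an open $U\supset x-E$ with $\nu(U)\le \nu(x-E)+\eps$, and compactness of $x-E$ gives $y-E\subset U$ for $y$ near $x$; density of $\{g=1\}$ survives the removal of the null set because nonempty open sets have positive measure; and the contradiction at an a.e.\ point $x_0\in E\setminus N$ with $g(x_0)=0$ but $g(x_0)\ge\limsup_n g(y_n)=1$ is airtight since $m(E)>0$. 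The comparison is instructive. Your argument is soft and uses only that $E$ is compact, nowhere dense and of positive measure, so it applies verbatim in $\R^d$ — in particular it subsumes the two-dimensional example of Section 6.2 without the self-similar structure the paper's quantitative method relies on, and it bears on the paper's open problem on nowhere dense spectral sets for every bounded set that is a.e.\ equal to a closed one (it does not reach sets admitting no closed representative, since upper semicontinuity of $\1_E$ is essential to your first step). It is really the closed-set counterpart of the semicontinuity argument behind \cite[Theorem 3.6]{LM22} for open sets: for a closed set, weak tiling of the complement forces $g\ge 1$ on $\closure(E^\cm)$ while $g=0$ a.e.\ on $E$, i.e.\ $m(E\cap\closure(E^\cm))=m(\partial E)=0$, which is absurd for a nowhere dense set of positive measure. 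What the paper's proofs buy instead is structural information your soft argument does not provide: Section 5 computes the essential difference set $\Delta(E)$ explicitly (showing it is a finite union of intervals and that $E_k(\xi_1,\dots,\xi_k)$ is a packing region strictly larger than $E$), feeding into the packing-region machinery of Section 4 which also rules out spectrality directly via \thmref{thmA6}; Section 6 gives quantitative lower bounds on overlaps that measure \emph{how far} the approximating sets $E_n$ are from weakly tiling. For the bare statement of \thmref{thmCSNWT}, however, your proof is shorter and strictly more general.
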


The definition
of a symmetric Cantor set
will be given in
\secref{secCANTOR}.

If we combine \thmref{thmCSNWT} with \thmref{thmPRDWKT}
then we obtain the following conclusion for multi-dimensional
nowhere dense sets with a cartesian product structure:

\begin{thm}
	\label{thmPRDCSPM}
Let $\Omega=E\times B$ be the product
of a symmetric Cantor set $E \sbt \R$ 
of positive measure, and an arbitrary
bounded, measurable set $B \sbt \R^m$.
Then  $\Omega$ is not a spectral set
in $\R \times \R^m$.
\end{thm}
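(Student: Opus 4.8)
The plan is to derive the result by contradiction, chaining together three of the results established above. First I would assume, toward a contradiction, that the product set $\Omega = E \times B$ is spectral in $\R \times \R^m$. Applying \thmref{thmLMSWT}, the spectrality of $\Omega$ forces it to weakly tile its complement in $\R \times \R^m$ by translations.

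The next step is to exploit the product structure of weak tiling. Since $E$ is a compact symmetric Cantor set (hence bounded and measurable) and $B$ is bounded and measurable by hypothesis, both factors satisfy the assumptions of \thmref{thmPRDWKT}. The ``only if'' direction of that theorem then yields that each factor weakly tiles its own complement; in particular, $E$ weakly tiles its complement in $\R$ by translations.

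This, however, contradicts \thmref{thmCSNWT}, according to which a symmetric Cantor set $E \sbt \R$ of positive measure \emph{cannot} weakly tile its complement by translations. The contradiction shows that the assumption was untenable, so $\Omega$ cannot be spectral, which completes the argument.

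I do not anticipate any genuine obstacle here, since the entire difficulty has already been absorbed into the three cited results: the substantive geometric content resides in \thmref{thmCSNWT} and in the forward implication of \thmref{thmPRDWKT}, while \thmref{thmLMSWT} supplies the passage from spectrality to weak tiling. The only routine verification is that the hypotheses of \thmref{thmPRDWKT} hold for both factors, which is immediate from the assumptions on $E$ and $B$.
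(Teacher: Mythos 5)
Your proposal is correct and follows exactly the paper's own route: the paper obtains \thmref{thmPRDCSPM} precisely by combining \thmref{thmLMSWT} (spectrality implies weak tiling of the complement), the product-to-factor direction of \thmref{thmPRDWKT}, and \thmref{thmCSNWT} as the final contradiction. Nothing is missing; your observation that the hypotheses of \thmref{thmPRDWKT} are trivially satisfied by $E$ and $B$ is the only verification needed.
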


We give two proofs of \thmref{thmCSNWT}.
In \secref{secCANTOR} we prove it by closely examining the \textit{essential difference set} of a symmetric Cantor set in $\R$ and proving that the Cantor set admits a \textit{packing region} which is strictly larger than the set itself (see \secref{secPACKREG}). In \secref{secOVERLAPS} we prove directly that the  Cantor set cannot weakly tile its complement by translations, by establishing in a quantitative way the same property for the $n$-th generation set that approximates the Cantor set.


\section{Weak tiling and equidecomposability}\label{secEQUID}

In this section we prove \thmref{thmNCWT},
that is, we show that if a (not necessarily convex)
    polytope $A \sbt  \R^d$
    can weakly tile its complement by
    translations, then $A$ is equidecomposable 
    by translations to a cube.
As a consequence, any \emph{convex} polytope
    that can weakly tile its complement  by
    translations  must be centrally symmetric 
    and have centrally symmetric facets.
    In turn, this implies \thmref{thmCVXBWT}
    (see \secref{secINTCVXDOM}).

\subsection{}
The Fourier transform of a function $f \in L^1(\R^d)$ is defined by
\[
\ft f (\xi)=\int_{\R^d} f (x) \, e^{-2\pi i\langle \xi,x\rangle} dx,
\quad \xi \in \R^d.
\]
We will need a result from \cite{LL21}
concerning the zeros of the Fourier transform
$\ft{\1}_A$ of the indicator function $\1_A$ of a 
(not necessarily convex) polytope $A \subset \R^d$.

Given  $\delta>0$ and
real numbers $\tau_1, \dots, \tau_k$
we consider the set
\begin{equation}
\label{eqTdef}
    T = T(\delta; \tau_1, \dots, \tau_k) =
    \{ n \in \Z : |e^{2 \pi i n \tau_j} - 1| < \delta, 
    \; 1 \leq j \leq k \}.
\end{equation}
If we are also given $R>0$ then we let
\begin{equation}
\label{eqJdef}
    J = J(R) = 
    \{ n \in \Z : 0 < |n| < R\}.
\end{equation}
Finally, given also
$\eps > 0$ and a vector $v \in \R^d$
we define
\begin{equation}
\label{eqSdef}
    S = S(T, J, v, \eps) = 
    \{ nv + w : n \in T \setminus J, \; w \in \R^d, \; |w|< \eps\}.
\end{equation}

The following result was proved in \cite{LL21}
although it was not explicitly stated there in this form.

\begin{thm}[{\cite{LL21}}]
\label{thmFA}
    Let $A$ be a polytope in $\R^d$. If
     $A$ is not equidecomposable by translations
    to a cube, then there exist $\delta>0$,
    real numbers $\tau_1, \dots, \tau_k$,
    a nonzero vector $v \in \R^d$,  $\eps>0$
    and $R>0$
    such that the Fourier transform $\ft{\1}_A$
    has no zeros in the set $S$, where $T$, $J$
    and $S$ are the three sets defined by \eqref{eqTdef},
     \eqref{eqJdef} and \eqref{eqSdef}.
\end{thm}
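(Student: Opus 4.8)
The plan is to extract the statement from the analysis of \cite{LL21} by making its mechanism explicit. The starting point I would use is the exact \emph{vertex expansion} of $\ft{\1}_A$: triangulating $A$ into simplices and applying Brion's formula to each (the interior contributions cancel because $\1_A=\sum_i\1_{A_i}$ a.e.), one obtains finitely many points $p$ with nonzero reals $c_p$ and edge vectors $a_{p,1},\dots,a_{p,d}$ such that
\[
\ft{\1}_A(\xi)=\frac{1}{(2\pi i)^d}\sum_{p}\frac{c_p\,e^{-2\pi i\langle\xi,p\rangle}}{\prod_{j=1}^d\langle\xi,a_{p,j}\rangle}.
\]
Each summand is homogeneous of degree $-d$ times a unimodular phase, so along any ray in a direction $v$ with $\langle v,a_{p,j}\rangle\ne0$ for all $p,j$ this is an \emph{exact} identity with no lower-order corrections.

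I would then isolate the \emph{phase-aligned leading coefficient}
\[
\Phi(v)=\sum_{p}\frac{c_p}{\prod_{j=1}^d\langle v,a_{p,j}\rangle},
\]
the value approached by $(2\pi i t)^d\,\ft{\1}_A(tv)$ when every phase $e^{-2\pi it\langle v,p\rangle}$ is driven to $1$. Two facts single this out as the right object: $\Phi(\cdot,v)$ depends only on the $a_{p,j}$ and $c_p$, hence is invariant under translations of $A$ and additive under dissection, i.e.\ it is a translation-invariant valuation; and a direct computation for a box, where $\ft{\1}_A(\xi)=\prod_k\frac{1-e^{-2\pi i\xi_k\ell_k}}{2\pi i\xi_k}$, gives $\Phi\equiv0$. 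Together these show that if $A$ were equidecomposable by translations to a cube then $\Phi_A(v)=0$ for every admissible $v$. The converse is precisely the content I must import from \cite{LL21}: if $A$ is \emph{not} equidecomposable by translations to a cube, then $\Phi_A(v)\ne0$ for some (indeed generic) direction $v$ with all $\langle v,a_{p,j}\rangle\ne0$. This is where the real work sits, and I expect it to be the main obstacle, since it amounts to showing that the family $\{\Phi(\cdot,v)\}_v$ detects the Hadwiger--M\"urner obstruction to being equidecomposable to a cube, via a reduction to the central-symmetry condition on facets.

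Granting such a $v$, the rest is a perturbation estimate. I would enumerate the points $p$ as $p_1,\dots,p_k$ and set $\tau_j=\langle v,p_j\rangle$, so that for $n\in T=T(\delta;\tau_1,\dots,\tau_k)$ every phase $e^{-2\pi in\langle v,p\rangle}$ lies within $\delta$ of $1$. For $\xi=nv+w$ with $|w|<\eps$ and $|n|\ge R$ I would write
\[
(2\pi i)^d n^d\,\ft{\1}_A(nv+w)=\sum_{p}\frac{c_p\,e^{-2\pi in\langle v,p\rangle}\,e^{-2\pi i\langle w,p\rangle}}{\prod_j\big(\langle v,a_{p,j}\rangle+\langle w,a_{p,j}\rangle/n\big)},
\]
and observe that as $\delta,\eps\to0$ and $R\to\infty$ the right-hand side tends, uniformly in such $n,w$, to $\Phi(v)$: the first exponential is controlled by $\delta$, the second by $\eps$, and the denominators approach $\prod_j\langle v,a_{p,j}\rangle$ since $|\langle w,a_{p,j}\rangle/n|\le\eps|a_{p,j}|/R$.

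Fixing $\delta,\eps$ small and $R$ large so that this error stays below $|\Phi(v)|$ then forces $\ft{\1}_A(nv+w)\ne0$ for all $n\in T$ with $|n|\ge R$ and all $|w|<\eps$. The only remaining point of $T\setminus J$ is $n=0$, where $\ft{\1}_A(w)\to\vol A>0$, so shrinking $\eps$ once more excludes zeros in that ball as well. Hence $\ft{\1}_A$ has no zeros on $S=S(T,J,v,\eps)$, which is the desired conclusion; the whole difficulty is concentrated in the nonvanishing of $\Phi$ for non-equidecomposable $A$, the step I would quote from \cite{LL21}.
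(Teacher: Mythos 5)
There is a genuine gap, and it sits exactly where you placed all the weight: the claimed nonvanishing of your ``phase-aligned leading coefficient'' $\Phi(v)$. In fact $\Phi(v)=\sum_p c_p/\prod_{j}\langle v,a_{p,j}\rangle$ vanishes \emph{identically}, for every polytope $A$ and every admissible $v$, so it cannot detect anything. To see this, apply the vertex expansion to the dilate $tA$, $t>0$: the edge data are unchanged up to a common factor and only the phases move, so
\[
\sum_{p}\frac{c_p\,e^{-2\pi i t\langle\xi,p\rangle}}{\prod_{j=1}^d\langle\xi,a_{p,j}\rangle}
=(2\pi i)^d\,\ft{\1}_{tA}(\xi)
=(2\pi i)^d\,t^d\,\ft{\1}_A(t\xi).
\]
Letting $t\to0^+$ with $\xi$ fixed and generic, the right-hand side tends to $0$ (since $|\ft{\1}_A|\le m(A)$), while the left-hand side tends to $\Phi(\xi)$ by continuity of the phases; hence $\Phi\equiv0$. (A direct check for the standard triangle, where all $c_p=1$, shows the three terms cancel identically, although a triangle is certainly not equidecomposable by translations to a square.) Consequently your two structural facts (translation-invariant valuation, vanishing on boxes) are true but vacuous, the converse you propose to import from \cite{LL21} is false and is not what that paper proves, and your otherwise sound perturbation estimate only shows that $n^d\,\ft{\1}_A(nv+w)\to0$ along the aligned sequence $n\in T$ --- which is consistent with $\ft{\1}_A$ vanishing on $S$ and certifies nothing.

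The actual mechanism in \cite{LL21}, which the paper's sketch follows, is structurally different. The obstruction to equidecomposability is a Hadwiger functional $H_\Phi$ attached to a \emph{flag of subspaces}, and it is detected not along a generic direction at the top order $n^{-d}$ --- where, as the computation above shows, the aligned coefficient always dies --- but along directions adapted to the flag, where the decay of $\ft{\1}_A$ is slower and the leading coefficient, extracted by an iterated anisotropic limit (\cite[Theorem 4.1]{LL21}), is a trigonometric polynomial $p(u)=\sum_{j=1}^{k}c_j e^{2\pi i u\tau_j}$ constructed in \cite[Section 6.1]{LL21} with $p(0)$ proportional to $H_\Phi(A)\neq0$. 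The frequencies $\tau_j$ entering the set $T$ of \eqref{eqTdef} are those of $p$, not your inner products $\langle v,p_j\rangle$ with the vertices; $T$ is chosen so that $|p(n)-p(0)|<\eta$ on it, and then the argument of \cite[Section 6]{LL21} produces $v$, $\eps$ and $R$. Your final steps (almost-periods via simultaneous Dirichlet approximation, uniform control of denominators for $|n|\ge R$, handling $n=0$ via $\ft{\1}_A(0)=m(A)>0$) are fine and genuinely parallel to how $T$ is used, but they must be run on the flag-adapted expansion whose aligned leading coefficient is $p(0)\propto H_\Phi(A)$, not on the Brion expansion's top-order term.
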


We briefly indicate how this 
can be inferred from \cite{LL21}.
If $A$ is not equidecomposable by translations
to any cube, then there exists a Hadwiger
functional $H_\Phi$ such that $H_\Phi(A) \neq 0$.
Let $p(u)$ be the trigonometric polynomial
constructed in \cite[Section 6.1]{LL21}, and
expand it  in the form $p(u) = \sum_{j=1}^{k} c_j
e^{2 \pi i u \tau_j}$. Let
$\delta = \delta(p, \eta) > 0$
be small enough such that
$|p(n) - p(0)| < \eta$ for every $n$
in the set \eqref{eqTdef}.
We then continue the proof as in 
\cite[Section 6]{LL21} to conclude
that there exist a nonzero vector
$v \in \R^d$,  $\eps>0$
    and $R>0$ such that $\ft{\1}_A$
has no zeros in the set \eqref{eqSdef}.

The results in \cite{LL21} are based on an
intricate analysis of
 the asymptotic behavior of the 
Fourier transform $\ft{\1}_A$,
see \cite[Theorem 4.1]{LL21}.
The analysis becomes considerably simpler
if the polytope $A$ is assumed to 
be convex, see \cite[Sections 3 and 4]{GL17}.

\subsection{}
A measure $\mu$ on $\R^d$  is said to be \emph{translation-bounded}
 if for every (or equivalently, for some) open ball $B$ we have 
\[
\sup_{x \in \R^d} |\mu|(B+x) < +\infty.
\]
If  $\mu$ is a translation-bounded measure on $\R^d$,
then $\mu$ is a tempered distribution. 

If $f$ is a function in $L^1(\R^d)$ and 
$\mu$ is a translation-bounded measure on $\R^d$,
then the convolution $f \ast \mu$ is 
a locally integrable function on $\R^d$
defined uniquely (up to equality a.e.)
by the condition that
$(f \ast \mu) \ast \varphi = 
f \ast (\mu \ast \varphi)$
for every continuous, compactly supported
function $\varphi$ on $\R^d$.

\begin{thm}
\label{thmC1}
Let $f\in L^1(\R^d)$, $\int f \neq 0$,
and let $\mu$ be
a translation-bounded measure on $\R^d$.
If we have $f \ast \mu = 1$ a.e., then
$\ft{\mu} = (\int f)^{-1} \cdot \delta_0$
in the open set
$\zfft^\complement$.
\end{thm}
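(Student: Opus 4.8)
The plan is to pass to the Fourier side, where the hypothesis $f\ast\mu=1$ becomes the tempered-distribution identity $\widehat{f\ast\mu}=\delta_0$ (the Fourier transform of the constant function $1$ being $\delta_0$), i.e. formally $\ft f\cdot\ft\mu=\delta_0$. Since $\ft f$ is continuous, $\zfft$ is closed and $U:=\zfft^{\cm}$ is open; moreover $\int f=\ft f(0)\neq0$, so $0\in U$. Proving that $\ft\mu=(\int f)^{-1}\delta_0$ on $U$ amounts to showing
\[
\langle \ft\mu,\varphi\rangle=\Big(\int f\Big)^{-1}\varphi(0)
\qquad\text{for every }\varphi\in C_c^\infty(U).
\]

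I would like to obtain this by ``dividing the identity $\ft f\,\ft\mu=\delta_0$ by $\ft f$'', but here lies the main obstacle: for a general $f\in L^1$ the function $\ft f$ is merely continuous, so $1/\ft f$ is not a smooth multiplier and the product $\ft f\cdot\ft\mu$ cannot be paired naively with the test function $\varphi$. The way around this is to transfer the computation back to the physical side, where the clean identity $f\ast\mu=1$ is available, after first absorbing the division into a genuine $L^1$ function. Fix $\varphi\in C_c^\infty(U)$. Because $\ft f$ belongs to the Wiener algebra $A(\R^d)=\mathcal F L^1$ and does not vanish on the compact set $\supp\varphi\subset U$, the regularity (local invertibility) of this algebra provides $u\in A(\R^d)$ with $u=1/\ft f$ on a neighbourhood of $\supp\varphi$; then $g:=\varphi\,u\in A(\R^d)$ satisfies $\ft f\cdot g=\varphi$ identically. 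Writing $g=\ft\rho$ with $\rho\in L^1$ and setting $\psi(x):=\rho(-x)\in L^1$, I obtain a genuinely integrable function with $\ft\psi(\eta)=g(-\eta)=\varphi(-\eta)/\ft f(-\eta)$.

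The next step is to check that this $\psi$ reproduces $\ft\varphi$ through $f$: computing the Fourier transform of $\Phi:=f^{-}\ast\psi$, where $f^{-}(x)=f(-x)$, gives $\ft\Phi=\ft f(-\,\cdot)\,\ft\psi=\varphi(-\,\cdot)=\widehat{\ft\varphi}$, whence $\Phi=\ft\varphi$, i.e. $\int f(x-y)\psi(x)\,dx=\ft\varphi(y)$. Then I would unwind the pairing on the physical side: with the standard convention $\langle\ft\mu,\varphi\rangle=\langle\mu,\ft\varphi\rangle$,
\[
\langle \ft\mu,\varphi\rangle=\int \Big(\int f(x-y)\psi(x)\,dx\Big)\,d\mu(y)
=\int \psi(x)\,(f\ast\mu)(x)\,dx=\int\psi,
\]
where the interchange of integrations is the routine Fubini argument valid in the translation-bounded framework set up above, and the last equality uses $f\ast\mu=1$ a.e. Finally $\int\psi=\ft\psi(0)=g(0)=\varphi(0)/\ft f(0)=(\int f)^{-1}\varphi(0)$, which is exactly the desired identity.

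The crux of the argument, and the step I expect to require the most care, is the construction of the integrable function $\psi$: it is here that the non-smoothness of $\ft f$ is overcome, by exploiting that $\ft f$ is invertible near $\supp\varphi$ inside the regular Banach algebra $A(\R^d)$, rather than trying to divide distributions by a continuous function. (If $f$ is compactly supported, as in the polytope application where $f=\1_A$, then $\ft f$ is real-analytic, $\varphi/\ft f\in C_c^\infty$, and $\psi$ may be taken in the Schwartz class, so this technical point disappears.) The remaining ingredients---the convolution identity $\ft\Phi=\varphi(-\,\cdot)$ and the Fubini interchange---are routine once the setup is in place.
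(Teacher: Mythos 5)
Your overall route is the same one the paper has in mind: the paper gives no self-contained proof of Theorem~\ref{thmC1}, deferring to \cite[Theorem 4.1]{KL16}, and the argument there is precisely your Wiener-algebra division --- localize $1/\ft f$ in $A(\R^d)=\mathcal{F}L^1$ near $\supp\varphi$ (possible since $\ft f$ does not vanish on this compact set), pull the pairing back to the space side, and use $f\ast\mu=1$ a.e. The construction of $g=\varphi u$ with $\ft f\cdot g=\varphi$ identically, the function $\psi$, and the final evaluation $\int\psi=g(0)=(\int f)^{-1}\varphi(0)$ are all correct.

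The one step you dismiss as ``the routine Fubini argument'' is, as literally stated, a genuine gap, for two reasons. First, knowing only $\psi\in L^1$, the absolute convergence $\iint |f(x-y)|\,|\psi(x)|\,dx\,d|\mu|(y)<\infty$ can fail: the inner function is merely an $L^1$ function of $y$, and an $L^1$ function need not be integrable against a translation-bounded measure (take $\mu=\sum_{n\in\Z}\delta_n$ and an $L^1$ function with tall thin spikes at the integers). Second, your identity $\Phi=\ft\varphi$ is deduced from equality of Fourier transforms of two $L^1$ functions, hence holds only Lebesgue-a.e.; you then integrate it $d\mu$, but in the intended application $\mu=\delta_0+\nu$ charges Lebesgue-null sets, so an a.e.\ identity is not enough. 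Both problems are repaired by an observation already latent in your construction: $\rho$ is the inverse transform of $g=\varphi u$, i.e.\ $\rho=\check{\varphi}\ast\check{u}$ with $\check{\varphi}$ Schwartz and $\check{u}\in L^1$, so $\psi$ is bounded and continuous and in fact satisfies $\sum_{k\in\Z^d}\sup_{k+[0,1]^d}|\psi|<\infty$ (a Schwartz function convolved with an $L^1$ function lies in the amalgam space $W(L^\infty,\ell^1)$). This gives (i) the absolute convergence above, since $\sup_z\int|\psi(z+y)|\,d|\mu|(y)<\infty$ by translation-boundedness, so Fubini applies and the inner integral $\int f(x-y)\,d\mu(y)$ agrees a.e.\ with the weakly defined $f\ast\mu=1$; and (ii) continuity of $\Phi=f^{-}\ast\psi$ (an $L^1\ast L^\infty$ convolution), which upgrades $\Phi=\ft\varphi$ from a.e.\ to everywhere, hence to a $\mu$-a.e.\ statement. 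With this paragraph added, your proof is complete and matches the cited argument.
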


Here we let $\zfft := \{ \xi \in \R^d : \ft{f}(\xi)=0\}$
denote the (closed) set of zeros of $\ft{f}$.

\thmref{thmC1} can be proved in a similar way to
\cite[Theorem 4.1]{KL16}.

\subsection{}
We now turn to the proof of the main result of this section.

\begin{proof}[Proof of \thmref{thmNCWT}]
Let $A$ be a polytope in $\R^d$, and assume that
$A$ can tile its complement weakly by
translations, that is, there exists a positive, locally
finite measure $\nu$ on $\R^d$ such that
$\1_A \ast \nu = \1_{A^\complement}$ a.e.
By \cite[Lemma 2.4]{LM22} the measure $\nu$
is not only locally finite, but in fact $\nu$
must be translation-bounded. It follows that the
measure $\mu := \delta_0 + \nu$ is translation-bounded
and satisfies $\1_A \ast \mu = 1$ a.e.
In turn, \thmref{thmC1}  implies that
$\ft{\mu} = m(A)^{-1} \cdot \delta_0$
in the open set ${\zoft{A}}^\complement$.

    We must prove that
    $A$ is equidecomposable by translations
    to a cube of the same volume.
    Suppose to the contrary that this is not the case.
    Then by \thmref{thmFA}
    there exist $\delta>0$,
    real numbers $\tau_1, \dots, \tau_k$,
    a nonzero vector $v \in \R^d$,  $\eps>0$
    and $R>0$ such that $\ft{\1}_A$
    has no zeros in the set $S$, where $T$, $J$
    and $S$ are the three sets defined by \eqref{eqTdef},
     \eqref{eqJdef} and \eqref{eqSdef}.
     It follows  that we have
    $\ft{\mu} = m(A)^{-1} \cdot \delta_0$
    in the open set $S$.

Now suppose that we are given a real-valued
Schwartz function
$g$ on $\R^d$ satisfying
\begin{equation}
\label{eqGC1}
    \supp(g) \sbt S,
    \quad
    \ft{g} \geq 0.
\end{equation}
Then we have
\begin{equation}
\label{eqGineq}
    \int_{\R^d} g(x) dx = \ft{g}(0) \leq 
    \ft{g}(0) + \int \ft{g}(\xi) d \nu(\xi) =  
     \int \ft{g}(\xi) d \mu(\xi),
\end{equation}
where the inequality in \eqref{eqGineq} is due to $\ft{g}$
being a nonnegative function and $\nu$
being a positive measure. On the  other hand,
we have
\begin{equation}
     \int \ft{g}(\xi) d \mu(\xi)= \mu(\ft{g}) =
     \ft{\mu}(g) = m(A)^{-1} g(0),
\end{equation}
where the last equality holds since we have
$\supp(g) \sbt S$ and
$\ft{\mu} = m(A)^{-1} \cdot \delta_0$
in the open set $S$. We conclude that
\begin{equation}
\label{eqGC2}
\int_{\R^d} g(x) dx \leq m(A)^{-1} g(0)
\end{equation}
for every real-valued Schwartz function $g$
satisfying \eqref{eqGC1}.
We will show that this leads to a contradiction,
by constructing an example of a
real-valued Schwartz function $g$ satisfying 
 \eqref{eqGC1}, but such that 
 \eqref{eqGC2} does not hold.
 
We choose a nonnegative Schwartz function $\varphi$
such that $\int \varphi =1$,
$\varphi$ is supported in
 the open ball of radius 
$\eps$ centered at the origin,
and $\ft{\varphi} \geq 0$. Next, let $\psi$
be a nonnegative, smooth function on the $k$-dimensional
torus $\T^k = (\R / \Z)^k$, such that
$\int \psi = 1$, $\psi$ is supported in the set
\begin{equation}
\label{eqPS1}
\{(t_1, \dots, t_k) : |e^{2 \pi i t_j} - 1| < \delta,
\; 1 \leq j \leq k\},
\end{equation}
  and  the Fourier coefficients
$\ft{\psi}(m)$, $m \in \Z^k$, are nonnegative.
We may assume that $R$ is an integer (by enlarging
it if necessary) and 
 define also the trigonometric polynomial
$p_N(t) := K_N(R \, t)$,
where $K_N$ is the classical Fej\'{e}r kernel,
\begin{equation}
\label{eqFej}
K_N(t) = \sum_{|n| < N} \Big(1 - \frac{|n|}{N} \Big)
e^{2 \pi i n t}, \quad t \in \T = \R / \Z.
\end{equation}
Then $p_N$ is nonnegative, $p_N(0) = N$, 
the Fourier coefficients $\ft{p}_N(n)$,
$n \in \Z$, are also nonnegative, 
$\ft{p}_N(0)=1$, and we
have  $\ft{p}_N(n)=0$ for every $n \in J$.

Finally, we define the function
\begin{equation}
\label{eqGN1}
g_N(x) := \sum_{n \in \Z}
\psi(n \tau) \,
\ft{p}_N(n) \, \varphi(x - n v),
\quad x \in \R^d,
\end{equation}
where we denote $\tau = (\tau_1, \dots, \tau_k)$.
Notice that there are only finitely many
nonzero terms in the sum \eqref{eqGN1}, and that the nonzero
terms correspond to integers $n$ belonging to 
the set $T \setminus J$.
Hence $g_N$ is a real-valued (in fact, nonnegative)
Schwartz function such that
$\supp(g_N)$ is contained in the set $S$. The Fourier transform
of $g$ is given by
\begin{equation}
\label{eqGN2}
\ft{g}_N(\xi) = \ft{\varphi}(\xi) \sum_{n \in \Z}
\psi(n \tau) \,
\ft{p}_N(n) \, e^{-2 \pi i n \dotprod{v}{\xi}}.
\end{equation}
Using the (absolutely convergent)
Fourier expansion of the function $\psi$, we obtain
\begin{equation}
\label{eqPS2}
\psi(n \tau) = \sum_{m \in \Z^k}
\ft{\psi}(m) e^{2 \pi i n \dotprod{m}{\tau}}.
\end{equation}
If we plug \eqref{eqPS2} into 
\eqref{eqGN2} and exchange the order of summation
(which is justified as $n$ goes through a finite
set of values) we obtain
\begin{equation}
\label{eqGN3}
\ft{g}_N(\xi) = \ft{\varphi}(\xi) 
\sum_{m \in \Z^k} \ft{\psi}(m) \,
p_N(\dotprod{m}{\tau} - \dotprod{v}{\xi}).
\end{equation}
Since all the terms on the right hand side
of \eqref{eqGN3} are nonnegative, we obtain
that $\ft{g}_N$ is a nonnegative function. We
conclude that $g_N$ satisfies the conditions
\eqref{eqGC1}.

To complete the proof
we will show that if $N$
is sufficiently large, then $g_N$ does not
satisfy \eqref{eqGC2}. Indeed, we may assume
that $\eps < \frac1{2} |v|$ which ensures
that the terms in the sum
\eqref{eqGN1} have pairwise disjoint supports.
This implies that
\begin{equation}
\label{eqGN4}
g_N(0) = \varphi(0) \psi(0),
\end{equation}
so that the value $g_N(0)$ does not depend on $N$. 
On the other hand, using 
\eqref{eqGN3}
we have
\begin{equation}
\label{eqGC5}
\int_{\R^d} g_N(x) dx = \ft{g}_N(0) 
\geq \ft{\varphi}(0) \ft{\psi}(0) p_N(0) = N,
\end{equation}
which can be arbitrarily large,
contradicting \eqref{eqGC2}. We have thus
arrived at the desired
  contradiction and so \thmref{thmNCWT}
  is proved.
\end{proof}


\section{Weak tiling and product domains}

In this section we  prove \thmref{thmPRDWKT}.
That is, we  show  that if $A \sbt \R^n$ 
 and $B \sbt \R^m$ are two bounded,
 measurable sets, then 
 	the  product set $\Omega=A\times B$   weakly tiles
 its complement in $\R^n \times \R^m$ by translations
 if and only if  both $A$ and $B$ can weakly tile
 their complements in $\R^n$ and $\R^m$
 respectively.

\begin{proof}[Proof of \thmref{thmPRDWKT}]
Suppose first that  both $A$ and $B$  weakly tile
 their complements. Then there exist two
   positive, locally finite measures $\nu_1$
   on $\R^n$ and $\nu_2$ on $\R^m$, such that
   \begin{equation}
   \text{$\1_{A} \ast (\delta_0 + \nu_1) = 1$ a.e.\  in $\R^n$},
   \quad
   \text{$\1_{B} \ast (\delta_0 + \nu_2) = 1$ a.e.\  in $\R^m$}.
   \end{equation}
It follows that the (also locally finite) product measure 
\begin{equation}
\mu := (\delta_0 + \nu_1) \times(\delta_0 + \nu_2)
\end{equation}
satisfies $\1_{\Om} \ast \mu = 1$ a.e.\ in $\R^n \times \R^m$.
Let $\nu$
be the positive measure
 \begin{equation}
\nu :=   \nu_1  \times \delta_0 + \delta_0 \times \nu_2
+ \nu_1 \times \nu_2,
\end{equation}
and observe that we have $\mu = \delta_0  + \nu$.
Hence 
$\1_{\Om} \ast \nu = \1_{\Om^\complement}$ a.e.\ and
$\Om$ weakly tiles its complement 
in $\R^n \times \R^m$ by translations.

Conversely, suppose that $\nu$
is a positive, locally finite measure in $\R^n \times \R^m$
satisfying
\begin{equation}
\label{eqWTCPRD}
 \1_{\Om} \ast \nu = \1_{\Om^\complement} \quad \text{a.e.}
\end{equation}
For each $y \in \R^m$ we define a measure
$\nu_y$ on $\R^n$ by the condition
\begin{equation}
    \int_{\R^n} \varphi(u) \, d\nu_y(u)
    = \iint_{\R^n \times \R^m} \varphi(u)
    \1_{B}(y-v) \, d \nu(u,v)
\end{equation}
for any bounded, compactly supported Borel
function $\varphi$ on $\R^n$.
It follows that
\begin{align}
& (\1_A \ast \nu_y)(x) = 
    \int_{\R^n} \1_{A}(x-u)  \, d\nu_y(u) \\[14pt]
    & \qquad  = \iint_{\R^n \times \R^m} \1_{A}(x-u) 
    \1_{B}(y-v) \, d \nu(u,v) = (\1_{\Om} \ast \nu)(x,y).
\end{align}
We use this together with \eqref{eqWTCPRD} to conclude
by an application of Fubini's theorem 
that there is a set $Y \sbt \R^m$
of full measure, such that for any $y \in Y$
 we have 
\begin{equation}
(\1_A \ast \nu_y)(x) = \1_{\Om^\complement}(x,y),
\quad x \in X(y),
\end{equation}
where $X(y)$ is a set of full measure in $\R^n$.
In particular, for  $y \in Y \cap B$ this yields
$\1_A \ast \nu_y = \1_{A^\complement}$ a.e.\ in $\R^n$,
which shows that $A$   weakly tiles its
complement  in $\R^n$. In the same way one can show that
also $B$  weakly tiles its   complement
in $\R^m$.
\end{proof}


\section{Packing regions}
\label{secPACKREG}

In this section we introduce the notion of
 a \emph{packing region}, and use it
to establish a geometric condition (different
from the weak tiling condition) that is  necessary  
for the spectrality of a set $A \subset \R^d$.
To prove our result (\thmref{thmA6})
 it will not suffice though to invoke \thmref{thmLMSWT},  since the 
 proof requires additional properties of the weak tiling 
 measure that are established in \cite{LM22}
  but not stated in 
 \thmref{thmLMSWT}.

\subsection{}
In order to define the notion of a packing region, we first
need to introduce the following definition:

\begin{definition}
\label{defESSDIF}
If $A \subset \R^d$ is a bounded, measurable set, then the set
\begin{equation}
	\label{eqC2.1}
	\Delta(A) := \{t \in \R^d: \mes(A \cap (A + t)) > 0\}
\end{equation}
will be called 
the \emph{essential difference set} of $A$.
\end{definition}

The set $\Delta(A)$ is a bounded open set, symmetric with respect to the origin.

The essential difference set
$\Delta(A)$ can be viewed as  the measure-theoretic analog of the algebraic
difference set $A-A$.  In particular, if $A$ is an 
open set then  $\Delta(A) = A-A$.
In general we  have $\Delta(A) \subset A-A$, but this inclusion can be strict.

\begin{definition}
\label{defPACKR}
We say that a bounded, measurable set $D \subset \R^d$
is a \emph{packing region for $A$} if we have
$\Delta(D) \subset \Delta(A)$. 
\end{definition}

Let us explain the  reason for the name ``packing region''.
If $\Lam$ is a finite or countable set in $\R^d$,
then we say that $A + \Lam$ is a \emph{packing}
if the translated copies $\{A + \lam\}$, $\lam \in \Lam$,
are disjoint up to measure zero. Then 
a bounded, measurable set $D$ 
is a packing region for $A$  if and only if
whenever we have that $A + \Lambda$ is a packing,
then also $D+\Lambda$ is a packing.

\subsection{}
The following theorem is the main
result of this section.

\begin{thm}
\label{thmA6}
Let $A$ be a bounded, measurable set  in $\R^d$. Suppose that
$A$ admits a packing region $D$ 
such that $m(D) > m(A)$. Then $A$ is neither spectral nor 
can it tile the space by translations.
\end{thm}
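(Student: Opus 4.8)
The plan is to treat the two assertions separately, deriving in each case a density bound incompatible with $m(D)>m(A)$.

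For the tiling assertion I would argue by contradiction. Suppose $A+\Lambda$ is a tiling of $\R^d$. Then the translates $\{A+\lam\}$ are disjoint up to measure zero, so $A+\Lambda$ is a packing; since $D$ is a packing region for $A$, the characterization following \defref{defPACKR} shows that $D+\Lambda$ is a packing as well. I would then compare two density estimates for $\Lambda$ over a large cube $Q_R=[-R,R]^d$. Counting the copies $A+\lam$ meeting $Q_R$ and using that they cover $Q_R$ up to measure zero gives $\#(\Lambda\cap Q_{R+c})\ge (2R)^d/m(A)$, so $\ldens(\Lambda)\ge 1/m(A)$; on the other hand the disjointness of the copies $D+\lam$ lying inside $Q_R$ gives $\#(\Lambda\cap Q_{R-c})\,m(D)\le (2R)^d$, so $\udens(\Lambda)\le 1/m(D)$. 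As $\ldens(\Lambda)\le\udens(\Lambda)$, this forces $m(D)\le m(A)$, a contradiction.

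For the spectral assertion I would again argue by contradiction, passing through weak tiling. If $A$ is spectral then by \thmref{thmLMSWT} it weakly tiles its complement, so there is a positive, locally finite $\nu$ with $\1_A\ast\nu=\1_{A^\cm}$ a.e.; by \cite[Lemma 2.4]{LM22} this $\nu$ is in fact translation-bounded, so $\mu:=\delta_0+\nu$ is translation-bounded and satisfies $\1_A\ast\mu=1$ a.e. The geometric heart of the matter is that $\1_A\ast\nu$ vanishes a.e.\ on $A$; integrating over $A$ and applying Fubini yields
\[
0=\int_A(\1_A\ast\nu)(x)\,dx=\int_{\R^d}\mes\big(A\cap(A+s)\big)\,d\nu(s),
\]
and since the integrand is nonnegative this gives $\nu(\Delta(A))=0$. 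Thus $\nu$ is supported off the essential difference set $\Delta(A)$, hence off $\Delta(D)\sbt\Delta(A)$: informally, the copy of $A$ at the origin overlaps none of the copies placed by $\nu$.

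To convert this into a contradiction I would compare averaged correlations. Writing $h=\1_A\ast\1_{-A}$ (so $h\ge 0$, $h(0)=m(A)$ and $\{h>0\}=\Delta(A)$), the relation $h\ast\nu=m(A)-h$ (valid everywhere, both sides being continuous) together with $\int h\,d\nu=0$ allows me to evaluate the averaged autocorrelation $\gamma$ of $\mu$ against $h$, obtaining $\int h\,d\gamma=1$, i.e.\ $\lim_R|Q_R|^{-1}\int_{Q_R}|\1_A\ast\mu|^2=1$. Since $\mu$ has density $1/m(A)$, the function $\1_D\ast\mu$ has mean value $m(D)/m(A)$, so for $h_D=\1_D\ast\1_{-D}$ (supported in $\overline{\Delta(D)}\sbt\overline{\Delta(A)}$) Cauchy--Schwarz gives $\int h_D\,d\gamma=\lim_R|Q_R|^{-1}\int_{Q_R}|\1_D\ast\mu|^2\ge (m(D)/m(A))^2$. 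The aim is then to establish the matching upper bound $\int h_D\,d\gamma\le m(D)/m(A)$, for this combined with the previous inequality forces $(m(D)/m(A))^2\le m(D)/m(A)$, whence $m(D)\le m(A)$, the contradiction sought.

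The step I expect to be the main obstacle is precisely this last upper bound, namely controlling the averaged overlap of the $D$-copies placed by $\mu$. The difficulty is that the weak-tiling measure need not be a genuine packing: it may create a continuum of overlaps of positive density, so the clean relation $\nu(\Delta(A))=0$, which only asserts that the single copy at the origin is unobstructed, does not of itself transfer to a density-level packing statement for $D$. Here the argument must invoke the finer structural properties of the weak-tiling measure established in \cite{LM22} beyond the bare conclusion of \thmref{thmLMSWT} — which is exactly the reason noted in \secref{secPACKREG} that \thmref{thmLMSWT} alone does not suffice.
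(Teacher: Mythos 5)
Your tiling half is correct and is in substance the paper's argument: the paper simply cites \cite[Lemma 3.2]{GL20} for the density comparison that you carry out by hand (tiling forces $\ldens(\Lambda)\ge 1/m(A)$, packing of $D+\Lambda$ forces $\udens(\Lambda)\le 1/m(D)$). The spectral half, however, contains a genuine gap, and you have located it yourself: the upper bound $\int h_D\,d\gamma \le m(D)/m(A)$ is never proved, and it cannot be extracted from the weak-tiling relation by the means you set up. The only structural fact you derive from $\1_A\ast\nu=\1_{A^\cm}$ is $\nu(\Delta(A))=0$ (this is \cite[Corollary 2.6]{LM22}), which says that the copy of $A$ at the origin overlaps no copy placed by $\nu$. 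But $\int h_D\,d\gamma$ is an averaged sum of overlaps between \emph{all pairs} of copies: the autocorrelation of $\mu=\delta_0+\nu$ is carried by differences of points of $\supp(\mu)$, and nothing prevents such differences from landing in $\Delta(D)\setminus\{0\}$ with positive density, since a weak tiling with fractional weights may overlap its copies pairwise while the weights still sum to $\1_{A^\cm}$. So no density-level packing statement for $D$ is available from weak tiling alone; tellingly, the paper's own weak-tiling-based result in this circle (\thmref{thmA7}) requires the extra hypothesis $D\supset A$, which \thmref{thmA6} does not grant you.

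The paper closes this gap by not passing through weak tiling at all. It invokes the measure $\gamma$ of \cite[Theorem 3.1]{LM22} --- essentially the autocorrelation of the \emph{spectrum} $\Lambda$, not of any weak-tiling measure --- whose decisive property lives on the Fourier side: $\ft{\gamma}=m(A)\,\delta_0$ in the open set $\Delta(A)$, together with $\gamma$ positive and $\gamma=\delta_0$ in a ball around the origin. Testing this against the positive-definite function $f=|\ft{\1}_D|^2$, whose transform $\ft{f}(x)=m(D\cap(D+x))$ satisfies $\ft{f}(0)=m(D)$ and vanishes off $\Delta(D)\subset\Delta(A)$, yields $\ft{\gamma}\cdot\ft{f}=m(A)m(D)\,\delta_0$, hence $\gamma\ast f\equiv m(A)m(D)$; then $f\le \gamma\ast f$ pointwise (using $\gamma\ge 0$, $f\ge 0$ and the atom of $\gamma$ at the origin) together with continuity of $f$ gives $m(D)^2=f(0)\le m(A)m(D)$, the desired contradiction. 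Your closing deferral to ``finer structural properties of the weak-tiling measure'' points at the right paper but at the wrong measure: the needed input is the Fourier-side purity of the spectrum's autocorrelation on $\Delta(A)$, a strictly stronger consequence of spectrality than the bare weak-tiling statement of \thmref{thmLMSWT}, and without some such input your scheme does not close.
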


\begin{proof}
We first show 
that $A$ cannot tile by translations. Indeed, if $A + \Lam$ 
is a tiling then $D + \Lam$ must be a packing, and it follows
e.g.\ from \cite[Lemma 3.2]{GL20} that $m(D) \leq m(A)$.
But we assumed that $m(D) > m(A)$, so we arrive at a contradiction.

Next we show 
that $A$ can neither be spectral. Suppose to the contrary that $A$ is spectral, and
let $\gamma$ be the measure given by \cite[Theorem 3.1]{LM22}.
Define the function $f :=  |\ft{\1}_D|^2$, then the Fourier transform of $f$ is the
function  $\ft{f} =  \1_D \ast \1_{-D}$, that is, we have
$\ft{f}(x) = m(D \cap (D+x))$ for every $x \in \R^d$.
In particular,
$\ft{f}(0) = m(D)$, and $\ft{f}$ vanishes on the set $\Delta(D)^\cm$.
On the other hand, we
 have $\ft{\gamma} = m(A) \, \delta_0$ in the open set $\Delta(A)$,
so due to the assumption that $\Delta(D) \subset \Delta(A)$ 
this implies that
$\ft{\gamma} \cdot \ft{f} = m(A) m(D) \, \delta_0$.
The measure $\ft{\gamma} \cdot \ft{f}$ is the Fourier transform of the function
$\gamma \ast f$ (see \cite[Section 2.3]{KL21}),
 and it follows that $\gamma \ast f = m(A) m(D)$ a.e.

Recall now that $\gamma$ is a positive measure, and that
$\gamma = \delta_0$ in some open ball centered at the origin. Since $f$ is a nonnegative function, this implies
that
\[
f = \delta_0 \ast f  \leq \gamma \ast f = m(A) m(D) \quad \text{a.e.}
\]
But since $f$ is a continuous function, it follows that the
inequality $f(t) \leq m(A) m(D)$ must in fact hold
for every $t \in \R^d$. However
 $f(0) = m(D)^2$, so this is possible only if $m(D) \leq m(A)$.
Since we assumed that $m(D) > m(A)$, we arrive at
a contradiction.
\end{proof}

\begin{remark}
Let us comment that the proofs of Theorems \ref{thmNCWT}
and \ref{thmA6}  share a common idea: in order to arrive at a contradiction
we construct a positive definite function with prescribed support,
whose integral is ``too large'' compared to its value at the origin.
\end{remark}

\begin{remark}
We note that there is a notion of an \emph{orthogonal packing region}, 
that was introduced  in \cite{LRW00} and
 extended to general bounded, measurable sets in \cite[Section 3.6]{GL20}.
 By definition, a bounded measurable set  $D \subset \R^d$ is  an 
 ortho\-gonal packing region for $A$ if we have
 $\Delta(D)  \sbt {\zoft{A}}^\complement$.
 This notion was used  in \cite[Lemma 2.3]{LRW00} and
 \cite[Theorem 7]{Kol00b} to establish 
 a result analogous to  \thmref{thmA6},
 namely, if $A$
admits an orthogonal packing region $D$ 
such that $m(D) > m(A)^{-1}$, then $A$ is neither spectral nor 
can it tile by translations.
Notice however that the
 notion of an orthogonal packing region 
 is not purely geometric, as it involves
the set $\zoft{A}$ of the zeros of 
 the Fourier transform $\ft{\1}_A$.
\end{remark}

\subsection{}
We give two examples demonstrating how to use  \thmref{thmA6}.

\begin{example}
Let $A$ and $D$ be the two planar domains
 illustrated respectively
on the left and right hand sides of Figure \ref{fig:PERP}.
It is straightforward to verify that $\Delta(D)$ is the
open rectangle $(-2, 2) \times (-3,3)$, and that
$\Delta(A)$ contains this rectangle. Hence
$D$ serves as a packing region for $A$. But notice that
$m(A) = 5$, $m(D) = 6$, so that $m(D)>m(A)$.
It thus follows from \thmref{thmA6} that $A$ is not spectral.
\end{example}


\definecolor{myblue}{rgb}{0.2,0.2,0.5}
\definecolor{myred}{rgb}{0.9,0.2,0.2}
\tikzstyle{mystyle}=[line width=0.3mm, myblue]

\begin{figure}[ht]
\centering

\begin{tikzpicture}[scale=0.925, style=mystyle]


\fill[myblue,opacity=0.275] 
	(0,0) -- (3,0) -- (3,1) -- (2,1) -- (2,3) -- 
	(1,3) -- (1,1) -- (0,1) -- (0,0);

\draw[gray,opacity=0.2] (-1,-1) grid (4,4);

\draw[myblue]
	(0,0) -- (3,0) -- (3,1) -- (2,1) -- (2,3) -- 
	(1,3) -- (1,1) -- (0,1) -- (0,0);


\fill[myred,opacity=0.275] 
	(7,0) -- (9,0) -- (9,3) -- (7,3) -- (7,0);

\draw[gray,opacity=0.2] (6,-1) grid (10,4);

\draw[myred]
	(7,0) -- (9,0) -- (9,3) -- (7,3) -- (7,0);

\end{tikzpicture}

\caption{The planar domain $A$ shown on the left is not spectral,
since its has  a packing region $D$  (shown on the right) satisfying
 $m(D)>m(A)$.}
\label{fig:PERP}
 \end{figure}
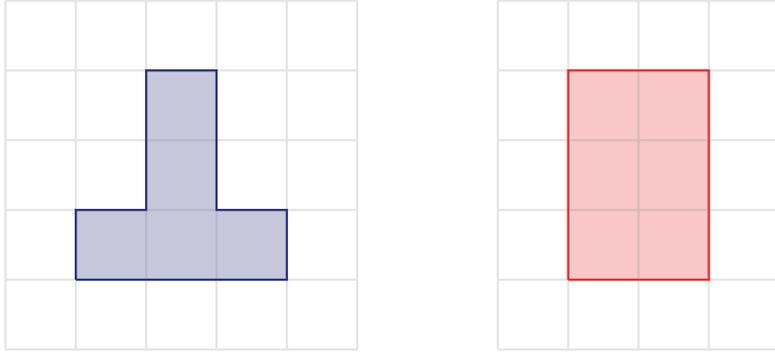


\begin{example}
\label{expNONSYM}
Let $A$ be a convex body  in $\R^d$, and assume
that $A$ is not 
centrally symmetric. Then
the convex body $D = \frac1{2} (A-A)$
is a packing region for $A$, and we have
 $m(D)>m(A)$
 by the Brunn-Minkowski inequality.
Using \thmref{thmA6} this yields
that a spectral 
 convex body must be
centrally symmetric,
a result first proved in \cite{Kol00a} 
based on the same consideration.
\end{example}

\begin{remark}
Due to \cite{LM22}  we know  that a convex body $A \sbt \R^d$
 is  spectral if and only if it satisfies the following four conditions:
  (i)  $A$ is a convex polytope; 
  (ii)  $A$ is centrally symmetric; 
  (iii) all the facets of $A$ are centrally symmetric; and
  (iv) each belt of $A$ has either $4$ or $6$ facets. 
  \exampref{expNONSYM} shows that
  by an application of  \thmref{thmA6} one can prove the
  necessity of condition (ii)   for the spectrality of $A$.
  We observe however that the same is not true for the other three
  conditions (i), (iii) and (iv).
    Indeed, let a convex body $A \sbt \R^d$ be centrally symmetric,
    say, $-A = A$. Then 
$\Delta(A) = 2 \interior(A)$,
where $\interior(A)$ is the set of interior points of $A$. 
If  $D$ is a  packing region for $A$, then $\Delta(D) \subset \Delta(A)$
and hence
\begin{equation}
    \label{eqCVBDPR}
    m(\Delta(D)) \leq m(\Delta(A)) = 2^d m(A).
\end{equation}
On the other hand,
we can invoke a version 
of the   Brunn-Minkowski inequality
for the essential difference set $\Delta(D)$, 
 see the inequality (2.6) in \cite{BL76},
 which implies that
\begin{equation}
    \label{eqSBMI}
    m(\Delta(D)) \geq 2^d m(D).
\end{equation}
It thus follows from \eqref{eqCVBDPR} and \eqref{eqSBMI} 
that   $m(D) \leq m(A)$, 
whenever  $D$ is a  packing region for $A$.  Hence,
 assuming that one of the conditions
 (i), (iii) or (iv) fails to hold does not imply
 the existence of a packing region $D$ 
such that $m(D) > m(A)$, and therefore one cannot
establish the non-spectrality of $A$ by an application of
 \thmref{thmA6}.
\end{remark}

%
%
%
%
%
%

\subsection{}
The next result connects the notions of a packing region and weak tiling.

\begin{thm}
\label{thmA7}
Let $A$ be a bounded, measurable set  in $\R^d$. Suppose that
$A$ admits a packing region $D$ 
such that not only $m(D) > m(A)$, but moreover $D \supset A$.
 Then $A$ cannot weakly tile its complement by translations.
\end{thm}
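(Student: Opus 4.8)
The plan is to assume, for contradiction, that $A$ weakly tiles its complement, and then to reach a contradiction with $m(D)>m(A)$ by a \emph{spatial} cross-correlation argument rather than by imitating the Fourier-analytic proofs of Theorems \ref{thmNCWT} and \ref{thmA6}. First I would record the basic structure: if $\nu$ is the positive, locally finite measure with $\1_A\ast\nu=\1_{A^\complement}$ a.e., then by \cite[Lemma 2.4]{LM22} the measure $\nu$ is translation-bounded, so $\mu:=\delta_0+\nu$ is translation-bounded and satisfies $\1_A\ast\mu=1$ a.e. Two facts will drive the argument. The first is that $\nu$ charges no mass to the essential difference set: integrating $\1_A\ast\nu=\1_{A^\complement}$ over $A$ gives $\int m(A\cap(A+t))\,d\nu(t)=0$, and since the integrand is positive exactly on $\Delta(A)$ this yields $\nu(\Delta(A))=0$. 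The second is that the hypotheses pin down $\Delta(D)$ relative to $\Delta(A)$: the packing-region assumption gives $\Delta(D)\subset\Delta(A)$, while $A\subset D$ gives the reverse inclusion (since $m(D\cap(D+t))\ge m(A\cap(A+t))$), so in fact $\Delta(D)=\Delta(A)$. For the proof only the inclusion $\Delta(D)\subset\Delta(A)$ together with $A\subset D$ is needed.

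The heart of the argument is to test the relation $\1_A\ast\mu=1$ against $D$. I would set $g:=\1_A\ast\1_{-D}$, a nonnegative, continuous, compactly supported function with $g(s)=m(A\cap(D+s))$, so that $g(-t)=m\bigl((A+t)\cap D\bigr)$ and $g(0)=m(A\cap D)=m(A)$ using $A\subset D$. On the one hand, by associativity of convolution for an $L^1$ function against a translation-bounded measure,
\[
g\ast\mu=\1_{-D}\ast(\1_A\ast\mu)=\1_{-D}\ast 1=m(D),
\]
the constant function $m(D)$; since $g\ast\mu$ is continuous, its value at the origin is exactly $m(D)$. On the other hand, that same value equals $\int g(-t)\,d\mu(t)=g(0)+\int g(-t)\,d\nu(t)$. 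The key point is that $t\mapsto g(-t)=m\bigl((A+t)\cap D\bigr)$ vanishes outside $\Delta(D)$: if $m\bigl((A+t)\cap D\bigr)>0$ then, because $A\subset D$, also $m\bigl((D+t)\cap D\bigr)>0$, i.e.\ $t\in\Delta(D)\subset\Delta(A)$. Combined with $\nu(\Delta(A))=0$ this forces $\int g(-t)\,d\nu(t)=0$, so the second evaluation gives $g(0)=m(A)$. Equating the two evaluations of $(g\ast\mu)(0)$ yields $m(D)=m(A)$, contradicting $m(D)>m(A)$.

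The main obstacle is conceptual rather than computational: one must resist copying the proof of \thmref{thmA6}. That argument passes through $\ft{\mu}=m(A)^{-1}\delta_0$ on the open set $\zoft{A}^\complement$ and would require the Fourier support of $|\ft{\1}_D|^2$, essentially $\Delta(D)$, to avoid the zero set $\zoft{A}$ --- that is the \emph{orthogonal} packing condition, which is \emph{not} implied by the purely geometric hypothesis $\Delta(D)\subset\Delta(A)$ (the two notions differ, as the remark following \thmref{thmA6} explains). The resolution is to stay in the spatial domain and exploit $A\subset D$ directly: this inclusion is what simultaneously produces the nonzero defect $m(D)-m(A)$ and confines the relevant cross-correlation to $\Delta(D)\subset\Delta(A)$, precisely where the singular part $\nu$ of the tiling measure is null. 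The only technical point to handle with care is the justification that $g\ast\mu$ is a genuine continuous function whose pointwise value at $0$ equals $\int g(-t)\,d\mu(t)$; this is exactly where translation-boundedness of $\nu$ and the associativity of $f\ast\mu$ recalled earlier are invoked.
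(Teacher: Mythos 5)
Your proof is correct and is essentially the paper's own argument in different clothing: evaluating $(g\ast\mu)(0)$ for $g=\1_A\ast\1_{-D}$ is exactly the paper's integration of the tiling identity over $D$, your $g(-t)$ is the paper's correlation function $\varphi(t)=m((A+t)\cap D)$, and the same two facts --- $\varphi$ vanishes off $\Delta(A)$ (via $A\subset D$ and $\Delta(D)\subset\Delta(A)$) while $\nu$ gives no mass to $\Delta(A)$ --- drive the contradiction $m(D)=m(A)$. The only cosmetic differences are that you re-derive $\nu(\Delta(A))=0$ by integrating the weak tiling identity over $A$ where the paper cites \cite[Corollary 2.6]{LM22}, and that your detour through translation-boundedness and continuity of $g\ast\mu$ is dispensable, since Tonelli applied to the nonnegative integrands already yields the identity $m(D)=m(A)+\int\varphi\,d\nu$ without pointwise evaluation.
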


\begin{proof}
We use the same
 argument as in \cite[Theorem 3.5]{LM22}.
Suppose to the contrary that 
$ \1_{A} \ast \nu = \1_{A^\complement}$ a.e.,
where $\nu$  is  a positive, locally finite measure on $\R^d$.
 Then 
 \begin{equation}
\label{eqMESDPHI}
m(D \cap A^\complement) = \int_{D} \1_{A^\complement} \, dm
= \int_{D} (\1_A \ast \nu) \, dm = \int \varphi \, d\nu,
\end{equation}
where $\varphi(t) := m((A+t) \cap D)$. 
The assumptions that
$D$ is a packing region for $A$ and $D \supset A$
imply that
$\varphi$ must vanish on the set $\Delta(A)^\complement$.
On the other hand, the measure
$\nu$ is supported on the set $\Delta(A)^\complement$
due to \cite[Corollary 2.6]{LM22}. Hence the integral on the right hand side
of \eqref{eqMESDPHI} vanishes. We conclude that
$m(D \cap A^\complement)=0$, or equivalently,
$m(D) = m(D \cap A)$. But this contradicts our assumption
that $m(D) > m(A)$. 
\end{proof}


\section{Cantor sets of positive measure: via the essential difference set}
\label{secCANTOR}

In this section we prove  \thmref{thmCSNWT},
which asserts that  a symmetric Cantor set
of positive measure in $\R$
cannot weakly tile its   complement by translations.
The approach is based on the construction of 
a packing region $D$ for the set $E$ with
$m(D) > m(E)$ and moreover $D \supset E$.
The conclusion then follows from \thmref{thmA7}.

\subsection{}
We start by recalling
the definition of a symmetric Cantor set in $\R$.
Let $\{\xi_k\}$, $k =1,2,\dots$,  be a sequence
of real numbers such that
$0 < \xi_k < \frac{1}{2}$ for all $k$.
The \emph{symmetric Cantor set associated to the
ratio sequence $\{\xi_k\}$} is the closed set
$E(\xi_1, \xi_2, \dots) \subset \R$
obtained from the interval $[0,1]$
by a Cantor type construction, where at the 
$k$'th step we remove from each one of the 
intervals of the previous step a central 
open interval of relative length $1 - 2 \xi_k$,
so that at the $k$'th step we obtain 
$2^k$ closed intervals of common length
$\xi_1 \cdots \xi_k$.

The Lebesgue measure of the set
$E(\xi_1, \xi_2, \dots)$ is
\begin{equation}
    \label{eqMESCS}
    m(E(\xi_1, \xi_2, \dots)) = 
    \lim_{k \to\infty} 2^{k} \xi_1 \cdots \xi_k.
\end{equation}
We are interested in the case where 
$E(\xi_1, \xi_2, \dots)$ 
is a set of positive measure.

\subsection{}
Our proof of  \thmref{thmCSNWT}
 is based on the following lemma:

\begin{lem}
\label{lemFCSWTN}
Let $E$ be a symmetric Cantor set
  of positive measure in $\R$. 
Then $E$ admits a packing region $D$ 
satisfying the two conditions 
$D \supset E$ and $m(D) > m(E)$.
 \end{lem}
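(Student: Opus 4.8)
The plan is to produce $D$ as a finite union of the intervals making up one of the approximating generations of $E$; the entire difficulty is concentrated in understanding the essential difference set $\Delta(E)$ precisely through the self-similar structure of the construction. Write $a_k = \xi_1\cdots\xi_k$ for the common length of the $2^k$ intervals at step $k$, let $F_k$ denote their union (so $E = \bigcap_k F_k$ and $m(F_k) = 2^k a_k$ decreases strictly to $m(E)$, since $2\xi_{k+1}<1$), let $L_k \sbt [0,1]$ be the set of their left endpoints, and let $E^{(k)} = E(\xi_{k+1}, \xi_{k+2}, \dots)$ be the tail Cantor set, so that $E \cap [\ell,\ell+a_k] = \ell + a_k E^{(k)}$ for each step-$k$ interval.

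First I would record the self-similar identity
\[
\Delta(E) = (L_k - L_k) + a_k\,\Delta(E^{(k)}),
\]
valid for every $k$: it follows by decomposing $E$ into its $2^k$ atoms $\ell + a_k E^{(k)}$, $\ell \in L_k$, and noting that $m(E \cap (E+t)) > 0$ exactly when some pair of atoms overlaps in positive measure. The identical computation applied to $F_k = \bigcup_{\ell \in L_k}(\ell + a_k[0,1])$ gives $\Delta(F_k) = (L_k - L_k) + a_k(-1,1)$. Consequently, if I can locate a level $k$ for which $\Delta(E^{(k)}) = (-1,1)$, then $D := F_k$ automatically satisfies $\Delta(D) = \Delta(E)$; since also $D \supset E$ and $m(D) = 2^k a_k > m(E)$, such a $D$ is precisely the packing region required. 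This reduces the lemma to a single claim about the tails.

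\emph{Claim: $\Delta(E^{(k)}) = (-1,1)$ for all sufficiently large $k$.} This is the heart of the argument, and I expect it to be the main obstacle. Two ingredients feed into it. On the one hand, positivity of measure forces $\prod_k(2\xi_k)$ to converge to a positive limit, hence $\xi_k \to \tfrac12$ and $c_k := m(E^{(k)}) \to 1$; a direct inclusion--exclusion estimate in the window $[t,1]$ then gives $m(E^{(k)} \cap (E^{(k)} + t)) \geq 2c_k - 1 - t$, so the central radius $r_k := \sup\{r : (-r,r) \sbt \Delta(E^{(k)})\}$ obeys $r_k \geq 2c_k - 1 \to 1$. On the other hand, the one-step case of the identity above, $\Delta(E^{(j)}) = \xi_{j+1}\Delta(E^{(j+1)}) \cup \bigl(\xi_{j+1}\Delta(E^{(j+1)}) \pm (1-\xi_{j+1})\bigr)$, applied to the central interval $(-r_{j+1},r_{j+1})$, shows that once $r_{j+1} \geq \tfrac{1-\xi_{j+1}}{2\xi_{j+1}}$ the three translated copies of this interval overlap and merge into a single interval, which yields the improvement $1 - r_j \leq \xi_{j+1}(1 - r_{j+1})$.

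Finally I would combine the two ingredients. Since $r_k \geq 2c_k - 1 \to 1$ while $\tfrac{1-\xi_k}{2\xi_k} \to \tfrac12$, the merging condition $r_k \geq \tfrac{1-\xi_k}{2\xi_k}$ holds for all $k$ beyond some $k_0$; for such $k$ the inequality $1 - r_j \leq \xi_{j+1}(1 - r_{j+1})$ telescopes, and because $\prod_{i=j+1}^{M}\xi_i = a_M/a_j \to 0$ whereas $1 - r_M \leq 1$, it forces $r_j = 1$, i.e.\ $\Delta(E^{(j)}) = (-1,1)$, for every $j \geq k_0$. Taking $D = F_{k_0}$ then completes the proof. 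The only points demanding genuine care are the verification that the \emph{essential} (not merely algebraic) difference set satisfies the stated self-similar identity --- which rests on the atoms being exact translates of a common scaled copy of a tail set --- and the bookkeeping in the telescoping estimate that pushes the central radius all the way up to $1$.
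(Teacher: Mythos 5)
Your proof is correct, and its overall skeleton coincides with the paper's: both rest on the self-similar identity $\Delta(E(\xi_1,\xi_2,\dots)) = T_k - T_k + \xi_1\cdots\xi_k\,\Delta(E(\xi_{k+1},\xi_{k+2},\dots))$ (your $(L_k-L_k)+a_k\Delta(E^{(k)})$, which is \eqref{eqDESSSTR}, obtained from \propref{propDFUTF}), both reduce the lemma to showing $\Delta(E^{(k)})=(-1,1)$ for some $k$, and both then take $D$ to be the $k$-th generation set $E_k$, whose essential difference set \eqref{eqDEK} matches $\Delta(E)$. Where you genuinely diverge is in the proof of the key claim about the tails. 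The paper proves a threshold statement (\lemref{lemEDFEQQ}): if the measure is at least $\tfrac45$ then $\Delta=(-1,1)$, in two steps --- Step 1 covers $[0,1-\xi_1)$ via the comparison \eqref{eqMEETFFT} with $F=[0,1]$, which is, up to rewriting, exactly your inclusion--exclusion bound $r_k \geq 2c_k-1$; Step 2 covers the remaining range $[1-\xi_1,1)$ by expanding $t$ in the digit series $\sum_j r_j$ and applying Step 1 to a deep tail through the self-similar identity. You instead bootstrap the central radius: the one-step merging inequality $1-r_j \leq \xi_{j+1}(1-r_{j+1})$, valid once $r_{j+1}$ exceeds $\tfrac{1-\xi_{j+1}}{2\xi_{j+1}}$, telescopes against $\prod_i \xi_i \to 0$ (each $\xi_i < \tfrac12$) to force $r_j=1$. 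Your route avoids both the explicit $\tfrac45$ threshold and the digit expansion, at the price of the extra observation that positivity of $m(E)=\prod_k(2\xi_k)$ forces $c_k \to 1$ (which the paper also uses, in the form \eqref{eqSMTL}); the paper's route is slightly more explicit in that it identifies $\Delta(E^{(k)})=(-1,1)$ at any fixed level whose tail measure is at least $\tfrac45$. One small point of care: state the merging condition with strict inequality --- if $r_{j+1}$ equals the threshold exactly, the three open intervals merely abut and the endpoints $\pm\xi_{j+1}r_{j+1}$ of the central copy need not be covered --- but this costs nothing, since $r_k \to 1$ while the threshold tends to $\tfrac12$, so strict inequality holds for all large $k$ in any case.
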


  \thmref{thmCSNWT}  follows as an immediate consequence of
\thmref{thmA7} and     \lemref{lemFCSWTN}. So  it remains to prove the lemma.

We note that there exist results in the literature 
which concern the set of differences $E - E$ of a
 symmetric Cantor 
set $E$, see e.g.\ \cite{FN23} and the references
therein. For example, it is well known that if
 $E$ is the classical ternary Cantor set 
(associated to the constant ratio sequence 
$\xi_k = \frac{1}{3}$) then $E-E = [-1,1]$.
However, such results do not suffice for our
present purpose,
as in order to prove \lemref{lemFCSWTN}
we must establish that the \emph{essential difference set}
$\Delta(E)$ (and not only $E-E$)
is quite large.

We now turn to the  proof of \lemref{lemFCSWTN}. This will be done in several steps.

\subsection{}
We will need the following proposition.

\begin{prop}
    \label{propDFUTF}
    Let  $A$ be a bounded, measurable set in $\R^d$,
    and let $B = \bigcup_{j=1}^{n} (A+t_j)$
    where $t_1, \dots, t_n \in \R^d$. Then
    \begin{equation}
        \Delta(B) = \bigcup_{i,j} (\Delta(A) + t_i - t_j).
    \end{equation}
\end{prop}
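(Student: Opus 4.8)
The plan is to prove the set equality directly from the definition of $\Delta$, reducing everything to the positivity of the measure of intersections of translates of $A$. The only structural fact I will need is that a \emph{finite} union of sets has positive measure precisely when at least one of its constituent sets does.

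First I would expand the intersection defining $\Delta(B)$. Writing $B = \bigcup_i (A + t_i)$ and $B + t = \bigcup_j (A + t_j + t)$, distributivity of intersection over union gives
\[
B \cap (B + t) = \bigcup_{i,j} \bigl( (A + t_i) \cap (A + t_j + t) \bigr).
\]
Since this is a finite union, $\mes(B \cap (B+t)) > 0$ if and only if $\mes\bigl((A + t_i) \cap (A + t_j + t)\bigr) > 0$ for some pair $i, j$.

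Next I would exploit translation invariance of Lebesgue measure: translating the $(i,j)$ intersection by $-t_i$ yields
\[
\mes\bigl( (A + t_i) \cap (A + t_j + t) \bigr) = \mes\bigl( A \cap (A + (t + t_j - t_i)) \bigr),
\]
which is positive exactly when $t + t_j - t_i \in \Delta(A)$, that is, when $t \in \Delta(A) + t_i - t_j$. Combining the two steps, $t \in \Delta(B)$ if and only if $t \in \Delta(A) + t_i - t_j$ for some $i,j$, which is precisely the asserted identity.

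There is no genuine obstacle here, as the proposition follows from a direct computation; the only point meriting a moment's care is the passage from positivity of the union's measure to positivity of a single term, which relies on the finiteness of the index set (equivalently, on the fact that a countable union of null sets is null). I would state this explicitly to keep the argument rigorous.
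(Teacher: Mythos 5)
Your proof is correct, and since the paper explicitly omits the proof as ``straightforward,'' your direct computation --- distributing the intersection over the finite union, reducing positivity of the union's measure to positivity of a single term, and then using translation invariance of Lebesgue measure --- is exactly the intended argument. Your explicit remark that the reduction step relies on finiteness (or countability) of the index set is the right point to flag, and nothing further is needed.
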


The proof is straightforward and we omit the details.

\subsection{}
Let
$E(\xi_1, \xi_2, \dots)$ be the
symmetric Cantor set associated to a given
ratio sequence $\{\xi_k\}$
(that is, a sequence
of real numbers such that
$0 < \xi_k < \frac{1}{2}$ for all $k$).
Then the set $E(\xi_1, \xi_2, \dots)$  
consists of all points $x$ of the form
\begin{equation}
    \label{eqEPT}
    x = \sum_{k=1}^{\infty} r_k \eps_k,
\end{equation}
where
\begin{equation}
    \label{eqdefRK}
    r_k := \xi_1 \cdots \xi_{k-1} (1-\xi_k)
\end{equation}
and each $\eps_k$ is either $0$ or $1$.
If we let $T_k$ denote
the finite set of points $x$ of the form
\begin{equation}
    \label{eqEKDF}
    x = \sum_{j=1}^{k} r_j \eps_j 
\end{equation}
where
$\eps_j = 0$ or $1$, then we have
\begin{equation}
    \label{eqESSSTR}
    E(\xi_1,\xi_2,\dots) = T_k + 
    \xi_1 \cdots \xi_k E(\xi_{k+1}, \xi_{k+2}, \dots),
\end{equation}
so that $E(\xi_1,\xi_2,\dots)$ is the union of
$2^k$ disjoint homothetic copies of 
$E(\xi_{k+1}, \xi_{k+2}, \dots)$.

It follows from
\propref{propDFUTF}
that 
\begin{equation}
    \label{eqDESSSTR}
    \Delta(E(\xi_1,\xi_2,\dots)) = T_k - T_k + 
    \xi_1 \cdots \xi_k \Delta(E(\xi_{k+1}, \xi_{k+2}, \dots)).
\end{equation}

\subsection{}
The following result establishes that
\lemref{lemFCSWTN} holds in the special case
where the set $E(\xi_1, \xi_2, \dots)$ 
has measure sufficiently close to $1$.

\begin{lem}
    \label{lemEDFEQQ}
    Suppose that  
    $E(\xi_{1}, \xi_{2}, \dots)$
    has measure at least
    $\frac{4}{5}$. Then
\begin{equation}
    \label{eqEDFEQQ}
    \Delta(E(\xi_{1}, \xi_{2}, \dots)) = \Delta(I),
\end{equation}
where $I := [0, 1]$.
\end{lem}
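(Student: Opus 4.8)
The plan is to prove the two inclusions $\Delta(E)\subseteq\Delta(I)$ and $\Delta(I)\subseteq\Delta(E)$ separately, where $E:=E(\xi_1,\xi_2,\dots)$ and $\Delta(I)=(-1,1)$. The first inclusion is immediate: since $E\subseteq I$, every translate satisfies $E\cap(E+t)\subseteq I\cap(I+t)$, so $m\big(I\cap(I+t)\big)>0$ whenever $m\big(E\cap(E+t)\big)>0$. All the content lies in the reverse inclusion. For this I would first record two facts valid for every \emph{tail} $F=E(\xi_j,\xi_{j+1},\dots)$ of $E$. Each such tail again has measure at least $\tfrac{4}{5}$, since $m(E)=2^{j-1}\xi_1\cdots\xi_{j-1}\,m(F)$ with $2^{j-1}\xi_1\cdots\xi_{j-1}<1$; consequently, writing $F=\xi_j F'\sqcup\big((1-\xi_j)+\xi_j F'\big)$ with $F'=E(\xi_{j+1},\dots)$ gives $m(F)=2\xi_j\,m(F')\le 2\xi_j$, so every ratio satisfies $\xi_j\ge\tfrac{2}{5}$. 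A direct measure estimate then yields $(-\tfrac35,\tfrac35)\subseteq\Delta(F)$: for $0\le t<\tfrac35$ one has $m\big(F\cap(F+t)\big)\ge (1-t)-2\,m(I\setminus F)\ge \tfrac35-t>0$.

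The heart of the argument is a self-similar bootstrap. Writing $\xi=\xi_j$ and using the disjoint decomposition $F=\xi F'\sqcup\big((1-\xi)+\xi F'\big)$ into two homothetic copies of $F'$, I would estimate the overlap of $F$ with a translate $F+t$ by isolating the cross term $\big((1-\xi)+\xi F'\big)\cap(t+\xi F')$. A change of variables shows this cross term has positive measure exactly when $s:=\tfrac{1-t}{\xi}-1$ lies in $\Delta(F')$; hence if $(-c,c)\subseteq\Delta(F')$ then $\big(1-(1+c)\xi,\;1-(1-c)\xi\big)\subseteq\Delta(F)$. I then combine this interval with the interval $(-\tfrac35,\tfrac35)$ from the measure estimate. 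Using $\xi\ge\tfrac25$ and $c\ge\tfrac35$ one checks $1-(1+c)\xi<\tfrac35$, so the two intervals overlap; their union therefore contains $[0,\,1-(1-c)\xi)$, and since $\xi<\tfrac12$ we have $1-(1-c)\xi\ge\tfrac{1+c}{2}$. By the symmetry of $\Delta(F)$ about the origin this gives $\big(-\tfrac{1+c}{2},\tfrac{1+c}{2}\big)\subseteq\Delta(F)$.

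The subtle point is that this recursion deduces the conclusion for $F$ from the same statement for its tail $F'$, which is one level deeper, so a naive induction on the set would never bottom out. I would resolve this by proving a uniform statement by induction on a precision parameter $n$ instead: set $c_0=\tfrac35$ and $c_{n+1}=\tfrac{1+c_n}{2}$, and show that $(-c_n,c_n)\subseteq\Delta(F)$ \emph{for every tail $F$ simultaneously}. The base case $n=0$ is the measure estimate, and the inductive step is precisely the bootstrap above applied with $c=c_n$, whose hypothesis $(-c_n,c_n)\subseteq\Delta(F')$ is available exactly because the induction runs over all tails at once. Since $1-c_n=\tfrac25\,2^{-n}\to 0$, we have $c_n\uparrow 1$, whence $\Delta(F)\supseteq\bigcup_n(-c_n,c_n)=(-1,1)$ for every tail, in particular for $F=E$. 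Together with the easy inclusion this yields $\Delta(E)=(-1,1)=\Delta(I)$. I expect the main obstacle to be this bookkeeping, namely verifying that the two intervals always overlap so that the recursion closes; this is exactly where the standing hypothesis $m(E)\ge\tfrac45$ enters, through the bounds $\xi_j\ge\tfrac25$ and $c_n\ge\tfrac35$.
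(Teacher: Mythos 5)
Your proof is correct: I checked the key computations, and they all close. Every tail $F=E(\xi_j,\xi_{j+1},\dots)$ indeed has measure at least $\tfrac{4}{5}$ (so $\xi_j\ge\tfrac{2}{5}$), the perturbative estimate $m\bigl(F\cap(F+t)\bigr)\ge(1-t)-2m(I\setminus F)\ge\tfrac{3}{5}-t$ gives the base interval, the cross term $\bigl((1-\xi)+\xi F'\bigr)\cap(t+\xi F')$ has positive measure precisely when $\frac{t-(1-\xi)}{\xi}\in\Delta(F')$ (equivalent to your $s$ by symmetry of $\Delta(F')$), and with $\xi\ge\tfrac{2}{5}$, $c\ge\tfrac{3}{5}$ one gets $1-(1+c)\xi\le\tfrac{9}{25}<\tfrac{3}{5}$, so the recursion closes and $c_n=1-\tfrac{2}{5}\,2^{-n}\uparrow 1$. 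However, your organization is genuinely different from the paper's. The paper's Step 1 is your measure estimate (it yields $[0,1-\xi_1)\subset\Delta(E)$; your $[0,\tfrac{3}{5})$ is at least as large since $\xi_1\ge\tfrac{2}{5}$), but it then covers the remaining interval $[1-\xi_1,1)$ in a single stroke: for each $t$ it picks the depth $k$ with $r_1+\cdots+r_k\le t<r_1+\cdots+r_{k+1}$, rescales to $s\in[0,1-\xi_{k+1})$, applies Step 1 to the tail $E(\xi_{k+1},\dots)$ (measure at least $\tfrac{4}{5}$ by \eqref{eqMESTAILEST}), and transports back via the identity \eqref{eqDESSSTR}, $\Delta(E(\xi_1,\xi_2,\dots))=T_k-T_k+\xi_1\cdots\xi_k\,\Delta(E(\xi_{k+1},\dots))$, using the element $r_1+\cdots+r_k\in T_k-T_k$. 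So where you iterate a one-level two-copy splitting and grow the guaranteed interval $(-c_n,c_n)$ by induction on a precision parameter, uniformly over all tails, the paper makes a $t$-dependent choice of depth and uses the full $2^k$-copy decomposition once; unrolled, your recursion reconstructs the same translates $r_1+\cdots+r_k$ as compositions of the one-level shifts $1-\xi_j$. Your uniform-over-tails induction is exactly the right fix for the non-well-founded recursion you flagged, an issue the paper sidesteps because its reduction terminates in one step. What each buys: the paper's route is shorter once \eqref{eqDESSSTR} is in place and needs neither the bound $\xi_j\ge\tfrac{2}{5}$ nor any interval-overlap bookkeeping; yours avoids the multi-level combinatorics of $T_k$ and the greedy per-$t$ expansion, and makes the convergence quantitative ($1-c_n=\tfrac{2}{5}\,2^{-n}$), which could be convenient for Cantor-type constructions where a clean $k$-level formula for the essential difference set is less readily available.
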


It follows from \eqref{eqEDFEQQ} that the set
$D := I$ serves as a packing region for
    $E := E(\xi_{1}, \xi_{2}, \dots)$,
and we have $D \supset E$ and 
$m(D) > m(E)$.
We note that  $\Delta(I)$ is simply the interval $(-1,1)$.

\begin{proof}[Proof of \lemref{lemEDFEQQ}]
Write $m(E(\xi_{1}, \xi_{2}, \dots)) = 1 - \eps$.
From  \eqref{eqMESCS} it follows that 
\begin{equation}
    \label{eqTWOXIEST}
    2\xi_1 \geq 1-\eps,
\end{equation}
and
\begin{equation}
    \label{eqMESTAILEST}
    \lim_{n \to\infty} 2^{n} \xi_{k+1} \cdots \xi_{k+n} \geq 1-\eps
\end{equation}
for every $k$.
Note that 
$0 < \eps \leq \tfrac{1}{5}$
by assumption.

The set
$\Delta(E(\xi_{1}, \xi_{2}, \dots))$
is symmetric with  respect to  the  origin 
and it is a subset of $(-1,1)$,
so to establish \eqref{eqEDFEQQ}
it would  be enough to 
 show that  
 $\Delta(E(\xi_{1}, \xi_{2}, \dots))$
contains the interval $[0,1)$.
This will be done in two steps.

\emph{Step 1}:
We first show that 
$\Delta(E(\xi_{1}, \xi_{2}, \dots))$
contains the interval $[0,1-\xi_1)$.

Indeed, let   $t \in [0,1-\xi_1)$.
For any two bounded, measurable sets
$E$ and $F$ we have
    \begin{equation}
        \label{eqMEETFFT}
        m(E \cap (E+t)) \geq   m(F \cap (F+t))- 2 m(E \triangle F),
    \end{equation}
where 
$E \triangle F$ is the symmetric difference of $E$ and $F$. 
We apply this estimate 
to the two sets
$E = E(\xi_{1}, \xi_{2}, \dots)$  and
$F = [0, 1]$.
On one hand, we have 
$F \cap (F + t) =  [t, 1]$
and thus
\begin{equation}
\label{eqMFFT}
 m(F \cap (F + t)) = 1 - t > \xi_1 
 \geq \tfrac{1}{2} -   \tfrac{1}{2} \eps,
\end{equation}
where the last inequality follows from \eqref{eqTWOXIEST}.
On the other hand, notice that
\begin{equation}
    \label{eqSYMDIFFEF}
    m(F \triangle E)
    =   1 - m(E) = \eps.
\end{equation}
So  \eqref{eqMEETFFT},  \eqref{eqMFFT}
and \eqref{eqSYMDIFFEF} imply that
\begin{equation}
m(E \cap (E + t)) > \tfrac{1}{2} 
- \tfrac{5}{2}  \eps \geq 0,
\end{equation}
hence $E \cap (E  + t)$
  is a set of positive measure
  and  $t \in \Delta(E)$.

\emph{Step 2}:
Next we   show that 
$\Delta(E(\xi_{1}, \xi_{2}, \dots))$
contains also the interval $[1-\xi_1, 1)$.

Let therefore   $t \in [1-\xi_1, 1)$, then
we have $r_1 \leq t < 1$. 
Since 
$\sum_{j=1}^{\infty} r_j  = 1$,
 there is $k$ such that
\begin{equation}
    r_1 + \dots + r_k \leq t < 
    r_1 + \dots + r_k + r_{k+1}.
\end{equation}
Then we can write
\begin{equation}
    t = r_1 + \dots + r_k  + \xi_1 \cdots \xi_k s,
    \quad
    0 \leq s < 1-\xi_{k+1}.
\end{equation}
The set 
$E(\xi_{k+1}, 
\xi_{k+2}, \dots)$ also has measure
at least $\frac{4}{5}$, 
due to \eqref{eqMESTAILEST}, so
by what we have already shown
in Step 1
we have $s \in \Delta(E(\xi_{k+1}, 
\xi_{k+2}, \dots))$. Moreover, 
the number $r_1 + \dots + r_k $ belongs to the set
$T_k - T_k$. Hence using  
\eqref{eqDESSSTR}
we conclude that
\begin{equation}
    t \in 
     T_k - T_k + 
    \xi_1 \cdots \xi_k \Delta(E(\xi_{k+1}, \xi_{k+2}, \dots))
    = \Delta(E(\xi_1,\xi_2,\dots)),
\end{equation}
which completes the proof.
\end{proof}

\subsection{}
Let $E_k(\xi_1, \dots, \xi_k)$ be the set 
consisting of the $2^{k}$ disjoint closed intervals
of common  length
$\xi_1 \dots \xi_k$ obtained after the $k$'th
step of the Cantor type construction.
Then 
\begin{equation}
    \label{eqSETEKDEF}
    E_k(\xi_1, \dots, \xi_k) = T_k + 
    \xi_1 \cdots \xi_k I
\end{equation}
where (as before) $I = [0,1]$,
and we have
\begin{equation}
    \label{eqEKINTERS}
    E(\xi_1,\xi_2,\dots) =
    \bigcap_{k=1}^{\infty}
    E_k(\xi_1, \dots, \xi_k).
\end{equation}

It follows from \propref{propDFUTF}
that 
\begin{equation}
    \label{eqDEK}
    \Delta(E_k(\xi_1,\dots,\xi_k)) = T_k - T_k + 
    \xi_1 \cdots \xi_k \Delta(I).
\end{equation}

The next result  concludes the proof of  \thmref{thmCSNWT}
by establishing  that \lemref{lemFCSWTN} holds in the general case.

\begin{lem}
    \label{lemEKDFTEQ}
    Let
$E(\xi_1, \xi_2, \dots)$ have positive measure. Then
    there is $k$ such that 
    \begin{equation}
        \label{eqEKDFTEQ}
        \Delta(E(\xi_1, \xi_2, \dots)) = \Delta(E_k(\xi_1, \dots, \xi_k)).
    \end{equation}
\end{lem}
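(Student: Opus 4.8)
The plan is to reduce the desired identity to the special case already settled in \lemref{lemEDFEQQ}, applied to a suitable \emph{tail} of the ratio sequence. Comparing the two structural formulas \eqref{eqDESSSTR} and \eqref{eqDEK}, one sees that for every $k$ the sets $\Delta(E(\xi_1,\xi_2,\dots))$ and $\Delta(E_k(\xi_1,\dots,\xi_k))$ share the common summand $T_k - T_k$ and differ only in the rescaled terms $\xi_1\cdots\xi_k\,\Delta(E(\xi_{k+1},\xi_{k+2},\dots))$ and $\xi_1\cdots\xi_k\,\Delta(I)$. Consequently, to establish \eqref{eqEKDFTEQ} it suffices to exhibit a single index $k$ for which
\begin{equation}
\Delta(E(\xi_{k+1},\xi_{k+2},\dots)) = \Delta(I).
\end{equation}

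First I would observe that this is precisely the conclusion of \lemref{lemEDFEQQ}, which holds whenever the Cantor set in question has measure at least $\tfrac45$. The tail $E(\xi_{k+1},\xi_{k+2},\dots)$ is itself a symmetric Cantor set, associated to the shifted ratio sequence $\{\xi_{k+1},\xi_{k+2},\dots\}$, so \lemref{lemEDFEQQ} applies to it verbatim. Thus the whole lemma will follow once I produce an index $k$ for which the tail set has measure at least $\tfrac45$.

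The main point is then to verify that the measures of these tail sets tend to $1$. Using \eqref{eqMESCS} and factoring the partial products, the measure of the tail satisfies
\begin{equation}
m(E(\xi_{k+1},\xi_{k+2},\dots)) = \frac{m(E(\xi_1,\xi_2,\dots))}{2^{k}\,\xi_1\cdots\xi_k} = \frac{m(E)}{m(E_k(\xi_1,\dots,\xi_k))},
\end{equation}
since $2^{k}\xi_1\cdots\xi_k$ is exactly the total length $m(E_k)$ of the $k$'th generation set. Now the sequence $m(E_k)$ is decreasing and, by \eqref{eqMESCS}, converges to $m(E)$; crucially, the hypothesis that $E$ has \emph{positive} measure guarantees that this limit is nonzero. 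Hence the ratio above converges to $1$ as $k\to\infty$, and in particular exceeds $\tfrac45$ for all sufficiently large $k$. Fixing such a $k$ and invoking \lemref{lemEDFEQQ} for the tail sequence yields the displayed identity, and therefore \eqref{eqEKDFTEQ}. The only delicate step is the passage to the limit: were $m(E)$ allowed to vanish, the tail measures need not approach $1$, which is exactly why the positive-measure assumption is indispensable here.
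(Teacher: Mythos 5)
Your proof is correct and takes essentially the same route as the paper: reduce via the structural formulas \eqref{eqDESSSTR} and \eqref{eqDEK} to finding one $k$ with $\Delta(E(\xi_{k+1},\xi_{k+2},\dots)) = \Delta(I)$, and obtain that from \lemref{lemEDFEQQ} once the tail has measure at least $\tfrac{4}{5}$. The only (minor) difference is how the suitable $k$ is produced: the paper argues by contradiction, extracting blocks $k_j < k_{j+1}$ with $2^{k_{j+1}-k_j}\xi_{k_j+1}\cdots\xi_{k_{j+1}} < \tfrac{4}{5}$ which would force the limit in \eqref{eqMESCS} to vanish, whereas your exact identity $m(E(\xi_{k+1},\xi_{k+2},\dots)) = m(E)/m(E_k(\xi_1,\dots,\xi_k))$ together with $m(E_k) \to m(E) > 0$ yields the bound directly and in fact shows the tail measures converge to $1$ -- a slightly cleaner quantitative version of the same step.
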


It follows from \eqref{eqEKDFTEQ} that the set
$D := E_k(\xi_1, \dots, \xi_k)$ serves as a packing region for 
   $E := E(\xi_{1}, \xi_{2}, \dots)$. Moreover, the two
conditions  $D \supset E$ and 
$m(D) > m(E)$ are   satisfied,
so  \lemref{lemFCSWTN} is indeed established.

\begin{proof}[Proof of \lemref{lemEKDFTEQ}]
We can choose $k$ sufficiently large such that
    \begin{equation}
    \label{eqSMTL}
    \lim_{n \to\infty}
    2^{n} \xi_{k+1} \xi_{k+2} \cdots \xi_{k+n} \geq \tfrac{4}{5}
\end{equation}
(for otherwise, there would exist a sequence
$k_1 < k_2 < k_3 < \cdots$
such that
\begin{equation}
2^{k_{j+1} - k_j} \xi_{k_j+1} \xi_{k_j+2} \cdots \xi_{k_{j+1}} < \tfrac{4}{5}
\end{equation}
for every $j$, which implies that the limit in
    \eqref{eqMESCS} is zero contrary to our assumption).

Then by \lemref{lemEDFEQQ} we have
\begin{equation}
    \label{eqETLDF}
    \Delta(E(\xi_{k+1}, \xi_{k+2}, \dots)) = \Delta(I).
\end{equation}
Together with \eqref{eqDESSSTR}
and \eqref{eqDEK} this implies that
\begin{equation}
    \label{eqFUOI}
    \Delta(E(\xi_1,\xi_2,\dots)) = T_k - T_k + 
    \xi_1 \cdots \xi_k \Delta(I) =
    \Delta(E_k(\xi_1,\dots,\xi_k)),
\end{equation}
and  thus \eqref{eqEKDFTEQ} is proved.
\end{proof}

\begin{remark}
It follows from \eqref{eqFUOI} that if
$E(\xi_1, \xi_2, \dots)$ has positive measure,
then the essential difference
set $\Delta(E(\xi_1,\xi_2,\dots))$ is the union
of finitely many open intervals.
\end{remark}


\section{Cantor sets of positive measure: via weak tiling}
\label{secOVERLAPS}

In this section we give another proof of Theorem \ref{thmCSNWT}, which states 
that a symmetric Cantor set of positive measure cannot weakly
tile its complement by translations. In this proof the approach is direct
and does not go through the essential difference set.

\subsection{}
Suppose that we have a Cantor set $E \subset \RR$ which is the intersection of the compact sets $E_n$, each of which is a finite collection of closed intervals of length $\ell_n$. The set $E_{n}$ is constructed from $E_{n-1}$ by throwing away an open middle interval of length $d_n$ from each of the intervals of $E_{n-1}$. For $E$ to have positive measure it is \textit{necessary} that $d_n/\ell_n \to 0$.

The idea of this proof is to use the fact that the sets $E_n$ obviously cannot weakly tile their complement (the holes are too small compared to the intervals making up $E_n$). Making this observation quantitative is the key.

Suppose $\mu = \delta_0+\nu$, with $\nu$ being a nonnegative Borel measure and
\beql{wtiling}
\one_E \ast \mu = 1 \text{ on $\RR$ so also } \one_E \ast \nu = 0 \text{ on $E$}.
\eeq
Write $A_n$ for the union of intervals (each of length $d_n$) that we threw away from the intervals of $E_{n-1}$ in order to obtain the intervals of $E_n$ (each of length $\ell_n$).

Observe that
\beql{ratio}
m(A_n) = \frac{d_n}{2\ell_n} m(E_n).
\eeq

Since $E_{n+1} \subset E_n$ we have by the monotone convergence theorem that
\beql{lim}
\int_{E_n} \one_{E_n} \ast \nu \to \int_E \one_E \ast \nu = 0, \text{ as $n \to \infty$.}
\eeq
The last equality is due to \eqref{wtiling}.

We also have the crucial inequality (for all $n$ for which $d_n/\ell_n < 1$)
\beql{ineq}
\frac{\ell_n-d_n}{d_n} \int_{A_n} \one_{E_n} \ast \nu \le \int_{E_n} \one_{E_n} \ast \nu.
\eeq
To see this note that every time we ``load'' an interval of $A_n$ with a fractional copy (via the measure $\nu$) of some interval of $E_n$ we also ``load'' one or both the intervals of $E_n$ on each side of $A_n$ by at least a multiple $(\ell_n-d_n)/d_n$ of the load that goes onto the $A_n$-interval.

More formally, we first observe that for each interval $J$ of $E_{n-1}$ and for all $t \in \RR$ we have
\beql{J}
\frac{\ell_n-d_n}{d_n} m((E_n+t) \cap A_n \cap J) \le m((E_n+t) \cap E_n \cap J).
\eeq
Summing over all intervals $J$ comprising $E_{n-1}$ we obtain
\beql{noJ}
\frac{\ell_n-d_n}{d_n} m((E_n+t) \cap A_n) \le m((E_n+t) \cap E_n)
\eeq
or, equivalently,
\beql{noJ2}
\frac{\ell_n-d_n}{d_n} \int_{A_n} \one_{E_n}(x-t)\,dx \le \int_{E_n} \one_{E_n}(x-t)\,dx,
\eeq
and integration $d\nu(t)$ followed by Fubini's theorem gives \eqref{ineq}.

We continue from \eqref{ineq} using \eqref{ratio}:
\begin{align*}
\int_{E_n} \one_{E_n} \ast \nu &\ge \frac{\ell_n-d_n}{d_n} \int_{A_n} \one_{E_n} \ast \nu\\
 &\ge \frac{\ell_n-d_n}{d_n} \int_{A_n} \one_{E} \ast \nu \text{\ \  (since $E \subset E_n$)} \\
 &= \frac{\ell_n-d_n}{d_n} m(A_n) \text{\ \  (due to \eqref{wtiling} since $A_n \subset E^{\complement}$)}\\
 &= \frac{\ell_n-d_n}{d_n} \frac{d_n}{2\ell_n} m(E_n) \text{\ \  (from \eqref{ratio})}\\
 &= \left(\frac12 - \frac{d_n}{2\ell_n}\right) m(E_n)\\
 &\to \frac12 m(E) \text{ as $n\to\infty$.}
\end{align*}
This positive lower bound contradicts the limit \eqref{lim} and finishes the proof.

\subsection{}
It should be apparent that the proof above is quite flexible and does not impose much rigidity on the Cantor sets of positive measure to which it applies. Instead of trying to state the most general result possible let us indicate this flexibility by giving an example in two dimensions, to which the method applies. This Cantor set is not a cartesian product.

Define a Cantor set $E \subset [0, 1]^2$ of positive measure as follows (refer to Figure \ref{fig:cubes}).

The set $E$ will be the intersection of the decreasing sequence of compact sets $E_n$, with $E_0 = [0, 1]^2$. The $n$-th stage set $E_n$ will be a union of non-overlapping cubes of the same side-length $s_n$, all of them aligned at multiples of $s_n$. To obtain the set $E_n$ from $E_{n-1}$ we visit each of the cubes of side-length $s_{n-1}$ comprising $E_{n-1}$, we subdivide such a cube $Q$ into cubes of side-length $s_n = s_{n-1}/M_n$ (here $M_n>0$ is a fast increasing integer sequence) and we throw away any one of these cubes situated in the middle third of $Q$.

If the integer sequence $M_n$ grows sufficiently fast then the resulting Cantor set has positive measure. The proof of the previous section then applies with no essential changes.

\begin{figure}[ht]
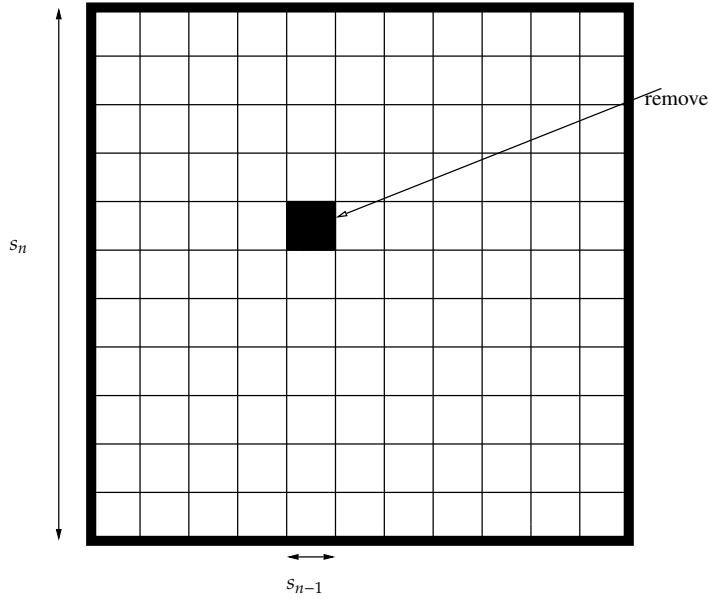


{\pgfkeys{/pgf/fpu/.try=false}%
	\ifx\XFigwidth\undefined\dimen1=0pt\else\dimen1\XFigwidth\fi
	\divide\dimen1 by 4343
	\ifx\XFigheight\undefined\dimen3=0pt\else\dimen3\XFigheight\fi
	\divide\dimen3 by 3591
	\ifdim\dimen1=0pt\ifdim\dimen3=0pt\dimen1=4143sp\dimen3\dimen1
	\else\dimen1\dimen3\fi\else\ifdim\dimen3=0pt\dimen3\dimen1\fi\fi
	\tikzpicture[x=+\dimen1, y=+\dimen3]
	{\ifx\XFigu\undefined\catcode`\@11
		\def\temp{\alloc@1\dimen\dimendef\insc@unt}\temp\XFigu\catcode`\@12\fi}
	\XFigu4143sp
	\ifdim\XFigu<0pt\XFigu-\XFigu\fi
	\catcode`\@11
	\pgfutil@ifundefined{pgf@pattern@name@xfigp3}{
		\pgfdeclarepatternformonly{xfigp3}
		{\pgfqpoint{-1bp}{-1bp}}{\pgfqpoint{9bp}{9bp}}{\pgfqpoint{8bp}{8bp}}
		{	\pgfsetdash{}{0pt}\pgfsetlinewidth{0.45bp}
			\pgfpathqmoveto{-1bp}{9bp}\pgfpathqlineto{9bp}{-1bp}
			\pgfusepathqstroke
		}
	}{}
	\catcode`\@12
	\pgfdeclarearrow{
		name = xfiga1,
		parameters = {
			\the\pgfarrowlinewidth \the\pgfarrowlength \the\pgfarrowwidth\ifpgfarrowopen o\fi},
		defaults = {
			line width=+7.5\XFigu, length=+120\XFigu, width=+60\XFigu},
		setup code = {
			\dimen7 2.1\pgfarrowlength\pgfmathveclen{\the\dimen7}{\the\pgfarrowwidth}
			\dimen7 2\pgfarrowwidth\pgfmathdivide{\pgfmathresult}{\the\dimen7}
			\dimen7 \pgfmathresult\pgfarrowlinewidth
			\pgfarrowssettipend{+\dimen7}
			\pgfarrowssetbackend{+-\pgfarrowlength}
			\dimen9 -\pgfarrowlength\advance\dimen9 by-0.45\pgfarrowlinewidth
			\pgfarrowssetlineend{+\dimen9}
			\dimen9 -\pgfarrowlength\advance\dimen9 by-0.5\pgfarrowlinewidth
			\pgfarrowssetvisualbackend{+\dimen9}
			\pgfarrowshullpoint{+\dimen7}{+0pt}
			\pgfarrowsupperhullpoint{+-\pgfarrowlength}{+0.5\pgfarrowwidth}
			\pgfarrowssavethe\pgfarrowlinewidth
			\pgfarrowssavethe\pgfarrowlength
			\pgfarrowssavethe\pgfarrowwidth
		},
		drawing code = {\pgfsetdash{}{+0pt}
			\ifdim\pgfarrowlinewidth=\pgflinewidth\else\pgfsetlinewidth{+\pgfarrowlinewidth}\fi
			\pgfpathmoveto{\pgfqpoint{-\pgfarrowlength}{-0.5\pgfarrowwidth}}
			\pgfpathlineto{\pgfqpoint{0pt}{0pt}}
			\pgfpathlineto{\pgfqpoint{-\pgfarrowlength}{0.5\pgfarrowwidth}}
			\pgfpathclose
			\ifpgfarrowopen\pgfusepathqstroke\else\pgfsetfillcolor{.}
			\ifdim\pgfarrowlinewidth>0pt\pgfusepathqfillstroke\else\pgfusepathqfill\fi\fi
		}
	}
	\clip(1560,-8726) rectangle (5903,-5135);
	\tikzset{inner sep=+0pt, outer sep=+0pt}
	\pgfsetlinewidth{+7.5\XFigu}
	\pgfsetstrokecolor{black}
	\draw (2346,-5189)--(2346,-8372);
	\draw (2635,-5189)--(2635,-8372);
	\draw (2924,-5189)--(2924,-8372);
	\draw (3214,-5189)--(3214,-8372);
	\draw (3503,-5189)--(3503,-8372);
	\draw (3792,-5189)--(3792,-8372);
	\draw (4081,-5189)--(4081,-8372);
	\draw (4371,-5189)--(4371,-8372);
	\draw (4660,-5189)--(4660,-8276);
	\draw (4949,-5189)--(4949,-8372);
	\draw (4660,-8372)--(4660,-8276);
	\draw (2057,-8083)--(5239,-8083);
	\draw (2057,-7794)--(5239,-7794);
	\draw (2057,-7505)--(5239,-7505);
	\draw (2057,-7215)--(5239,-7215);
	\draw (2057,-6926)--(5239,-6926);
	\draw (2057,-6637)--(5239,-6637);
	\draw (2057,-6347)--(5239,-6347);
	\draw (2057,-6058)--(5239,-6058);
	\draw (2057,-5479)--(5239,-5479);
	\pgfsetlinewidth{+60\XFigu}
	\draw (2057,-5189) rectangle (5239,-8372);
	\pgfsetlinewidth{+7.5\XFigu}
	\pgfsetfillpattern{xfigp3}{black}
	\draw[pattern,preaction={fill=black}] (3214,-6347) rectangle (3503,-6637);
	\pgfsetarrows{[line width=7.5\XFigu, width=26\XFigu, length=51\XFigu]}
	\pgfsetarrows{xfiga1-xfiga1}
	\draw (1865,-5189)--(1865,-8372);
	\draw (3214,-8468)--(3503,-8468);
	\pgfsetarrows{-}
	\draw (5239,-5286)--(5239,-8372);
	\draw (2057,-5769)--(5239,-5769);
	\pgfsetarrows{[open]}
	\pgfsetarrowsend{xfiga1}
	\draw (5432,-5672)--(3503,-6444);
	\pgfsetfillcolor{black}
	\pgftext[base,left,at=\pgfqpointxy{3214}{-8662}] {\fontsize{8}{9.6}\usefont{T1}{ptm}{m}{n}$s_{n-1}$}
	\pgftext[base,left,at=\pgfqpointxy{5335}{-5769}] {\fontsize{8}{9.6}\usefont{T1}{ptm}{m}{n}remove}
	\pgftext[base,left,at=\pgfqpointxy{1575}{-6637}] {\fontsize{8}{9.6}\usefont{T1}{ptm}{m}{n}$s_n$}
	\endtikzpicture}%

\caption{To go from $E_{n-1}$ to $E_n$ we remove one of the small cubes near the center of the big cube.}
\label{fig:cubes}
\end{figure}

\subsection{}
Finally let us mention that there do exist spectral
\emph{unbounded} nowhere dense sets  of positive and finite
 measure. One way to obtain such a set is to construct a set
that tiles by $\Z^d$ translates, and which therefore admits $\Z^d$
also as a spectrum. We describe the construction in dimension one,
but a similar idea works also in several dimensions.

Assume first that we have
\[
\one_{[0, 1]}(x) = \sum_{n \in \ZZ} \one_{E_n}(x) \quad \text{a.e.}
\]
where each $E_n$ is a  nowhere dense subset of $[0,1]$.
It follows then that the union
\[
E = \bigcup_{n\in\ZZ} (E_n + n)
\]
is a nowhere dense (unbounded) set in $\R$ which tiles by $\ZZ$ translates, 
and is therefore spectral.

To construct this partition $E_n$ we first pick a fat Cantor set in $[0, 1]$ (a Cantor set of positive measure -- we can construct such sets of arbitrarily large measure in any given interval) as our first set. At any stage in the construction the complement of the (finitely many) closed sets we have selected so far is an open subset of $(0, 1)$, which is therefore a disjoint countable union of intervals. In each of these intervals we select another fat Cantor set, taking care for the total measure of the complement to go to zero. This process exhausts the measure and it is clear that the resulting set coincides a.e.\ with a fundamental domain of the lattice $\ZZ$ in $\RR$. In other words $E+\ZZ$ is a tiling.


\section{Open problems}

We conclude the paper by posing some open problems.

\subsection{}
Let $\Om \sbt \R^d$ be a bounded, nowhere dense set 
of positive measure. Can $\Om$  be a spectral set?
As we have mentioned in \secref{subsecNW}, the answer is 
known to be negative in dimension one,
but in dimensions two and higher the problem is open.

\subsection{}
Let $\Omega=A\times B$ where $A$ is a convex body in $\R^n$, 
and $B$ is a bounded, measurable set in $\R^m$.
If $\Omega$ is a spectral set, then  $A$ must be spectral
according to \thmref{thmPRDCVX}. Is it true that
also $B$ must be spectral?

At present this is proved only for dimensions
$n=1$ and $2$ \cite{GL16}, \cite{GL20}.

\subsection{}
Let $K$ be a convex body in $\R^d$, and assume that $K$ can
weakly tile its  complement  by translations. Let $W(K)$ be the (nonempty,
convex) set of all positive, locally finite measures $\nu$ such that
$\1_{K} \ast \nu = \1_{K^\cm}$ a.e. If we endow $W(K)$ with the topology of
vague convergence, then $W(K)$ is also compact, so
by the Krein-Milman theorem $W(K)$ is the closed convex
hull of its extremal points. In particular, $W(K)$ has extremal points.

It is not difficult to verify  that any proper tiling 
(that is, any measure $\nu \in W(K)$ which is the sum of unit masses)
is an extremal point of $W(K)$. Is the converse true?

\subsection{}
Let $\Om$ be a bounded, measurable set in $\R^d$.
Consider the following   properties:
\begin{enumerate-roman}
\item
  \label{lemFN1.1.2}
  $\Om$ can tile the space (properly) by translations;
\item
  \label{lemFN1.1.3}
  $\Om$ is spectral;
\item
  \label{lemFN1.1.1}
  $\Om$ can weakly tile its complement by translations.
\end{enumerate-roman}

It is obvious that \ref{lemFN1.1.2} implies \ref{lemFN1.1.1},
and by \thmref{thmLMSWT} we know that also  
 \ref{lemFN1.1.3} implies \ref{lemFN1.1.1}.
On the other hand, 
\ref{lemFN1.1.1} does not imply 
\ref{lemFN1.1.2} (as an example, take a spectral set that cannot tile),
and also 
\ref{lemFN1.1.1} does not imply 
\ref{lemFN1.1.3} (take a tile that is not spectral).

Does there exist a set $\Om$ satisfying
\ref{lemFN1.1.1}, but such that
both \ref{lemFN1.1.2} and
\ref{lemFN1.1.3} do not hold?


\end{document}